\documentclass[11pt,a4paper]{article}

\usepackage{amsmath,amsfonts,amssymb}
\usepackage{bbm}
\usepackage{cases}

\usepackage{subcaption}
\usepackage{kotex}

\usepackage{xcolor}
\usepackage{graphicx}
\usepackage{ulem}
\allowdisplaybreaks

\newtheorem{defn}{Definition}[section]
\newtheorem{theo}[defn]{Theorem}
\newtheorem{lem}[defn]{Lemma}

\newtheorem{exam}[defn]{Example}

\newenvironment{proof}{{\bf Proof }}{{\vskip 0.1cm \hfill$\Box$}}
\begin{document} 

\noindent
{\Large \bf Resolvent approaches to elliptic regularity in stationary Fokker-Planck equations}
\\ \\
\bigskip
\noindent
{\bf Haesung Lee}  \\
\noindent
{\bf Abstract.}  
This paper investigates the local regularity of solutions to stationary Fokker-Planck equations on an open set $U \subset \mathbb{R}^d$ with $d \geq 2$. A central objective is to relax the classical assumptions on the coefficients by focusing on the case where the drift vector field $\mathbf{G}$ is only assumed to be locally square-integrable, i.e. $\mathbf{G} \in L^2_{loc}(U, \mathbb{R}^d)$, the symmetric diffusion matrix $A = (a_{ij})_{1 \leq i,j \leq d}$ is assumed to be locally uniformly strictly elliptic and bounded, with coefficients satisfying $a_{ij} \in VMO_{loc}(U)$ for all $1 \leq i,j \leq d$ and ${\rm div}A \in L^2_{loc}(U, \mathbb{R}^d)$. Our main result shows that any locally bounded function $h \in L^\infty_{loc}(U)$ satisfying the stationary Fokker-Planck equation $L^*(h\, dx) = 0$ must in fact belong to the local Sobolev space $H^{1,2}_{loc}(U)$. The proof is based on the construction of a sub-Markovian resolvent associated with the principal elliptic operator $L^A := {\rm trace}(A \nabla^2)$, combined with delicate energy-type inequalities. In particular, we show that the density $h$ can be realized as the weak $H^{1,2}$-limit of images of resolvent operators. 
\\ \\
\noindent
{Mathematics Subject Classification (2020): {Primary: 35B65, 35Q84; Secondary: 60J35, 47D07 }}\\

\noindent 
{Keywords: Stationary Fokker-Planck equations, Elliptic regularity, Resolvent, Weak maximum principle, Singular drifts, Invariant measures 
}

\section{Introduction} \label{intro}
The Fokker-Planck equation is fundamentally connected to stochastic analysis, particularly to the theory of stochastic differential equations (SDEs). Given a stochastic process $(X_t)_{t \geq 0}$ defined on a filtered probability space $(\Omega, \mathcal{F}, \mathbb{P}, (\mathcal{F}_t)_{t \geq 0})$ and a standard Brownian motion $(W_t)_{t \geq 0}$, consider the following SDE:
\[
X_t = x + \int_0^t \sigma(X_s) \, dW_s + \int_0^t \mathbf{G}(X_s) \, ds, \quad 0 \leq t < \infty, \quad \mathbb{P}\text{-a.s.}
\]
where $x \in \mathbb{R}^d$, $\sigma$ is a $d \times d$ matrix of functions, and $\mathbf{G}$ is a vector field that is locally integrable.
For any $f \in C_0^\infty(\mathbb{R}^d)$ and $t > 0$, applying It\^{o}'s formula (\cite[Chapter IV, Theorem 3.3]{R99}) yields
\begin{align*}
f(X_t) - f(X_0) &= \int_0^t \nabla f(X_s)  \, dX_s + \frac{1}{2} \sum_{i,j=1}^d \int_0^t \frac{\partial^2 f}{\partial x_i \partial x_j}(X_s) \, d\langle X^i, X^j \rangle_s \\
&= \int_0^t \nabla f(X_s) \sigma(X_s)\, dW_s + \int_0^t Lf(X_s) \, ds, \quad \text{ $\mathbb{P}$-a.s.},
\end{align*}
where the operator $L$ is given by
$$
Lf = {\rm trace}(A \nabla^2 f) + \langle \mathbf{G}, \nabla f \rangle, \quad \text{with } A = \frac12 \sigma \sigma^T.
$$
Taking expectations on both sides of the above identity, we obtain
\begin{equation}\label{fokkeriden}
\mathbb{E}[f(X_t)] - f(x) = \int_0^t \mathbb{E}[L f(X_s)]\,ds \quad \text{for all } f \in C_0^\infty(\mathbb{R}^d) \text{ and } t > 0,
\end{equation}
where $\mathbb{E}$ is the expectation with respect to $\mathbb{P}$. Let $p(t, x, \cdot)$ denote the probability density function of $X_t$ under the probability measure $\mathbb{P}$ (such a density exists under quite general assumptions and see \cite{L25ai}). Then, equation \eqref{fokkeriden} is equivalent to the following integral identity:
\begin{equation}\label{fokkerpllan}
\int_{\mathbb{R}^d} f(y)\, p(t, x, y)\,dy - f(x)
= \int_0^t \int_{\mathbb{R}^d} Lf(y)\, p(s, x, y)\,dy\,ds \quad \text{for all } f \in C_0^\infty(\mathbb{R}^d) \text{ and } t > 0.
\end{equation}
The family of measures $\left(p(t, x, y)\,dy\right)_{t \geq 0}$ is referred to as the solution to the Cauchy problem for the Fokker-Planck equation associated with the operator $L$ and the initial distribution $\delta_x$, the Dirac delta measure at $x$ (cf. \cite[Definition 6.1, Proposition 6.1.2]{BKRS15}).
We refer the reader to the introduction of \cite{L25ai} for a more detailed discussion of the Fokker-Planck equation. In fact, under appropriate ergodicity assumptions, one can show the existence of a probability measure $\nu$ on $\mathcal{B}(\mathbb{R}^d)$ such that
\begin{equation} \label{weakconver}
\lim_{t \to \infty} p(t, x, y)\,dy = \nu(dy) \quad \text{weakly on } \mathbb{R}^d,
\end{equation}
(see \cite[Theorem 1.1(iii)]{L25ai}).
Under suitable regularity assumptions on the coefficients $A$ and the drift vector field $\mathbf{G}$, it follows from \eqref{fokkerpllan} and \eqref{weakconver} that $L^* \nu=0$ on $\mathbb{R}^d$, i.e.
\begin{equation} \label{stafokkerp}
\int_{\mathbb{R}^d} Lf \, d\nu = 0  \quad \text{for all } f \in C_0^\infty(\mathbb{R}^d).
\end{equation}
A Borel measure $\nu$ satisfying \eqref{stafokkerp} is called a solution to the stationary Fokker-Planck equation associated with the operator $L$. In stochastic analysis, $\nu$ is also called an infinitesimally invariant measure for $(L, C_0^{\infty}(\mathbb{R}^d))$. In this paper, we investigate the regularity of the density of $\nu$. A systematic study of stationary Fokker-Planck equations was carried out in \cite{BKRS15, BKR01, BKR09, BRS12, BS17, B18}. In particular, if the diffusion coefficients satisfy $a_{ij} \in H^{1,p}_{loc}(\mathbb{R}^d)$ for all $1 \leq i,j \leq d$ with $p \in (d, \infty)$, $A = (a_{ij})_{1 \leq i,j \leq d}$ is locally uniformly strictly elliptic and bounded on $\mathbb{R}^d$ (i.e. \eqref{locellst} holds for $U=\mathbb{R}^d$), $\mathbf{G} \in L^p_{loc}(\mathbb{R}^d, \mathbb{R}^d)$ and \eqref{stafokkerp} holds, then according to \cite[Corollary 1.6.9]{BKRS15}, there exists a function $\rho \in H^{1,p}_{loc}(\mathbb{R}^d) \cap C(\mathbb{R}^d)$ such that  $\nu = \rho \,dx$.\\
Stationary Fokker-Planck equations can be viewed as a class of partial differential equations in which the second-order derivatives act on test functions. The regularity theory for their solutions has developed through successive advancements, beginning with the classical Weyl lemma \cite{W40}, and subsequently extended in works such as \cite{S73, F77, BS17}. For further details, we refer the reader to \cite{L25bv} and the references therein. A recent result (\cite[Corollary 1.2]{L25bv}) concerning the regularity of solutions to stationary Fokker-Planck equations generalizes \cite[Corollary 1.6.9]{BKRS15} under the assumption that (cf. Definitions \ref{defndivmat}, \ref{defnvmo})
${\rm div} A \in L^p_{loc}(\mathbb{R}^d, \mathbb{R}^d)$ with $p \in (d, \infty)$, $a_{ij} \in VMO_{loc}(\mathbb{R}^d)$ for all $1 \leq i,j \leq d$, and that the matrix $A = (a_{ij})_{1 \leq i,j \leq d}$ is locally uniformly strictly elliptic and bounded on $\mathbb{R}^d$. If, in addition, $\mathbf{G} \in L^p_{loc}(\mathbb{R}^d, \mathbb{R}^d)$ and \eqref{stafokkerp} holds, then it has been shown in \cite[Corollary 1.2]{L25bv} that there exists a function $\rho \in H^{1,p}_{loc}(\mathbb{R}^d) \cap C(\mathbb{R}^d)$ such that  $\nu = \rho\,dx$.\\
However, the assumption $\mathbf{G} \in L_{loc}^p(\mathbb{R}^d, \mathbb{R}^d)$ may still be regarded as rather restrictive. It is therefore natural to consider relaxing this condition to the case $\mathbf{G} \in L^2_{loc}(\mathbb{R}^d, \mathbb{R}^d)$. In fact, as shown in \cite{F81, St99}, a stochastic process associated with the operator $L$ can be constructed even when the drift is merely locally $L^2$-integrable with respect to a reference measure. This observation motivates the extension of the results in \cite[Corollary 1.6.9]{BKRS15} and \cite[Corollary 1.2]{L25bv} to this more general setting.
In particular, when the diffusion matrix is the identity, the assumption $\mathbf{G} \in L^2_{loc}(\mathbb{R}^d, \mathbb{R}^d)$ has been considered in \cite[Theorem 1(ii)]{BKR97}, where corresponding regularity results for the density of the solution were established. \\
In our setting, however, we require the a priori assumption that the solution $\nu$ admits a density belonging to $L_{loc}^\infty(U)$.
\begin{theo} \label{maintheore}
Let $U \subset \mathbb{R}^d$ be a (possibly unbounded) open set with $d \geq 2$. Assume that $\mathbf{G} \in L^2_{loc}(U, \mathbb{R}^d)$ and $c \in L^1_{loc}(U)$. Let $A = (a_{ij})_{1 \leq i,j \leq d}$ be a symmetric matrix of functions such that $a_{ij} \in VMO_{loc}(U)$ for all $1 \leq i,j \leq d$ (see Definition~\ref{defnvmo}), and suppose that ${\rm div} A \in L^2_{loc}(U, \mathbb{R}^d)$ (see Definition~\ref{defndivmat}). Also assume that $A$ is locally uniformly strictly elliptic and bounded on $U$, i.e. for every open ball $V$ with $\overline{V} \subset U$, there exist constants $\lambda_V, M_V > 0$ such that
\begin{equation} \label{locellst}
\lambda_V \| \xi \|^2 \leq \langle A(x) \xi, \xi \rangle, \quad \;\; \max_{1 \leq i,j \leq d} |a_{ij}(x)| \leq M_V
\quad \text{for a.e. } x \in V \text{ and all } \xi \in \mathbb{R}^d.
\end{equation}
Let $\tilde{\mathbf{F}} \in L^2_{loc}(U, \mathbb{R}^d)$ and $\tilde{f} \in L^1_{loc}(U)$. Suppose that $h \in L^{\infty}_{loc}(U)$ satisfies
\[
\int_U \left({\rm trace}(A \nabla^2 \varphi) + \langle \mathbf{G}, \nabla \varphi \rangle + c \varphi \right) h \, dx 
= \int_U \tilde{f} \varphi \, dx + \int_U \langle \tilde{\mathbf{F}}, \nabla \varphi \rangle \, dx \quad \text{for all } \varphi \in C_0^{\infty}(U).
\]
Then $h \in H^{1,2}_{loc}(U)$.
\end{theo}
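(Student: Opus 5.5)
We localize first and then pass to a limit through resolvents of the principal operator. Fix an open ball $B$ with $\overline{B}\subset U$ and a slightly larger ball $V$ with $\overline{B}\subset V$, $\overline{V}\subset U$, together with a cutoff $\chi\in C_0^\infty(V)$ with $\chi\equiv1$ on $B$. Since ${\rm div}A\in L^2_{loc}$, we can write ${\rm trace}(A\nabla^2\varphi)={\rm div}(A\nabla\varphi)-\langle {\rm div}A,\nabla\varphi\rangle$. This turns the equation into
\[
\int_U \Big(-\langle A\nabla\varphi,\nabla h\rangle \text{ (formally)} + \langle \mathbf{G}-{\rm div}A,\nabla\varphi\rangle h + c\varphi h\Big)dx=\int_U \tilde f\varphi\,dx+\int_U\langle\tilde{\mathbf F},\nabla\varphi\rangle dx .
\]
Set $u:=\chi h\in L^\infty(V)$. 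Testing with $\varphi$ replaced by $\chi\varphi$ for $\varphi\in C_0^\infty(V)$ gives a localized identity
\[
\int_V {\rm trace}(A\nabla^2\varphi)\,u\,dx=\int_V g\varphi\,dx+\int_V\langle \mathbf{F},\nabla\varphi\rangle dx .
\]
Here $\mathbf F\in L^2(V,\mathbb R^d)$ collects $-(\mathbf G\chi h)$, $\tilde{\mathbf F}\chi$ and the terms $h A\nabla\chi$; this is where boundedness of $h$ and $\mathbf G,{\rm div}A\in L^2$ are used. The function $g\in L^1(V)$ collects $\tilde f\chi$, $-c\chi h$, $\langle\mathbf G,\nabla\chi\rangle h$ and $h\,{\rm trace}(A\nabla^2\chi)$, together with terms $\langle {\rm div} A,\nabla \chi\rangle h$. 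The point of the localization is that $u$ is a bounded, compactly supported distributional solution of $(L^A)^*u=g-{\rm div}\mathbf F$ on $V$.

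Next I construct a sub-Markovian resolvent $(G_\alpha)_{\alpha>0}$ for $L^A={\rm trace}(A\nabla^2\cdot)$ on $V$ with Dirichlet boundary conditions. For $f\in L^p(V)$, $p$ large, I let $G_\alpha f\in H^{2,p}(V)\cap H^{1,p}_0(V)$ be the unique strong solution of $(\alpha-L^A)G_\alpha f=f$, which exists by $W^{2,p}$-theory for VMO coefficients. The weak maximum principle gives $0\le \alpha G_\alpha f\le 1$ for $0\le f\le1$. Duality then produces the adjoint resolvent $G'_\alpha$ on $L^1$ and $L^\infty$ with the same contraction bounds. I then set $u_\alpha:=\alpha G'_\alpha u$. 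Using $\varphi=G_\alpha\psi$ as a test function, which is allowed after approximation since $G_\alpha\psi\in H^{2,p}\cap H^{1,p}_0$ and $u$ has compact support in $V$, we get
\[
\int u_\alpha\psi\,dx=\alpha\int u\,G_\alpha\psi\,dx=\int u\psi\,dx+\int\big(L^AG_\alpha\psi\big)u\,dx=\int u\psi\,dx+\int g\,G_\alpha\psi\,dx+\int\langle\mathbf F,\nabla G_\alpha\psi\rangle dx .
\]
Consequently $u_\alpha-u=G'_\alpha g-\nabla$-dual terms, and $u_\alpha\to u$ in a weak sense as $\alpha\to\infty$.

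The key step is a uniform energy bound $\sup_{\alpha}\|\nabla u_\alpha\|_{L^2(B)}<\infty$ (or a bound on $V$ for a suitable modification). The natural device is the $\alpha$-energy $\mathcal E_\alpha(u_\alpha)=\alpha\int(u-u_\alpha)u_\alpha$ associated with $L^A$ in divergence form plus the ${\rm div}A$ drift. To estimate it I test the defining identity of $u$ with $\varphi=G_\alpha u_\alpha$-type functions and use $\lambda_V$-ellipticity to control $\int\langle A\nabla w,\nabla w\rangle$. The $\mathbf F$-term is absorbed by Cauchy–Schwarz. The $g$-term, which is only $L^1$, has to be paired with an $L^\infty$ function; this is why $h\in L^\infty_{loc}$ is needed, since $\|\alpha G_\alpha\cdot\|_\infty\le\|\cdot\|_\infty$ keeps $\|u_\alpha\|_\infty\le\|u\|_\infty$. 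The drift ${\rm div}A$ contributes $\int\langle{\rm div}A,\nabla w\rangle w$, which is handled by the same boundedness together with ${\rm div}A\in L^2$. I expect this energy inequality to be the main obstacle. The reason is that $u_\alpha$ solves an adjoint (nondivergence-dual) problem, so gradient control has to be extracted by rewriting $L^A$ in divergence form and tracking the nonsymmetric ${\rm div}A$ term carefully while keeping all constants independent of $\alpha$. Once the uniform bound holds, weak compactness in $H^{1,2}(B)$ together with $u_\alpha\to u$ identifies the limit as $u=h$ on $B$, so $h\in H^{1,2}(B)$. Since $B$ is arbitrary, $h\in H^{1,2}_{loc}(U)$.
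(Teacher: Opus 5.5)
There is a genuine gap at the step you defer as ``the main obstacle'': you apply the wrong resolvent. Your localization, the construction of the Dirichlet resolvent $G_\alpha$ of $L^A$, and the overall plan (uniform energy bound, then identify the weak limit) are sound and match the paper. The problem is that you take $u_\alpha=\alpha G'_\alpha u$ with $G'_\alpha$ the Banach adjoint, and you need $\|u_\alpha\|_{L^\infty}\le\|u\|_{L^\infty}$ to pair with $g$ and with the ${\rm div}A$ drift.

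Duality gives the opposite direction: $\|\alpha G_\alpha\|_{L^\infty\to L^\infty}\le 1$ only yields $\|\alpha G'_\alpha\|_{L^1\to L^1}\le 1$. An $L^\infty$ contraction for $G'_\alpha$ is equivalent to $\int L^A G_\alpha\psi\,dx=\int(\alpha G_\alpha\psi-\psi)\,dx\le 0$ for $\psi\ge 0$. This fails, e.g., for smooth $A$ with $\sum_{i,j}\partial_i\partial_j a_{ij}>0$ somewhere: take $\psi$ supported there and $\alpha$ large, so that $\alpha\int L^A G_\alpha\psi\,dx\to\int\psi\sum_{i,j}\partial_i\partial_j a_{ij}\,dx>0$.

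More fundamentally, $u_\alpha$ satisfies $\int({\rm trace}(A\nabla^2\varphi)-\alpha\varphi)\,u_\alpha\,dx=-\alpha\int u\varphi\,dx$ for $\varphi\in C_0^\infty(V)$. So it solves an equation of exactly the type in the theorem (with $\mathbf G=0$, $c=-\alpha$), but \emph{without} the a priori $L^\infty$ bound. Showing $u_\alpha\in H^{1,2}$ with $\alpha$-uniform bounds, so that $\alpha\int(u-u_\alpha)u_\alpha\,dx$ is genuinely an energy, is at least as hard as the statement you are proving. A form method does not rescue this either, since $(v,w)\mapsto\int\langle{\rm div}A,\nabla v\rangle w\,dx$ is unbounded on $H^{1,2}_0$ when ${\rm div}A$ is only in $L^2$.

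The missing idea is to apply the \emph{forward} resolvent to $h$ itself. The paper works with $w_\alpha:=\chi\,\alpha R_\alpha h\in H^{2,p}(B)\cap H^{1,2}_0(B)$. This function does satisfy $\|\alpha R_\alpha h\|_{L^\infty(B)}\le\|h\|_{L^\infty(B)}$ (Theorem \ref{maintheolem}(ii)). Its energy is computable because $L^A=L^0-\langle{\rm div}A,\nabla\cdot\rangle$ with $L^0={\rm div}(A\nabla\cdot)$ symmetric (Lemma \ref{coercievforml}).

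The $\alpha$-term in $\mathcal E^0(w_\alpha,w_\alpha)$ is controlled by $-\alpha\int(\alpha R_\alpha h-h)\,\alpha R_\alpha h\,\chi^2\,dx\le-\alpha\int(\alpha R_\alpha h-h)\,h\chi^2\,dx=-\int L^A(\alpha R_\alpha h)\,h\chi^2\,dx$. The right-hand side is evaluated by testing the equation for $h$ with $\chi^2\alpha R_\alpha h$, via smooth approximations $h_n$ of $h$. The $\mathbf G$, $\tilde{\mathbf F}$ and ${\rm div}A$ contributions are then bounded by $L^2$ norms times $\|h\|_{L^\infty}$ times $\mathcal E^0(w_\alpha,w_\alpha)^{1/2}$. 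The $c$ and $\tilde f$ terms are paired with the bounded function $\alpha R_\alpha h$. Young's inequality then gives $\sup_\alpha\mathcal E^0(w_\alpha,w_\alpha)<\infty$.

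The price is that $w_\alpha\to\chi h$ is no longer automatic. It follows from the symmetry of $L^0$: for $\psi\in C_0^\infty(B)$, $\int\chi\psi(\alpha R_\alpha h-h)\,dx=\int R_\alpha h\,L^0(\chi\psi)\,dx-\int\langle{\rm div}A,\chi\nabla R_\alpha h\rangle\psi\,dx=O(1/\alpha)$. This uses the sub-Markov bound and the uniform energy bound.
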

The main idea of this work is to construct a constant $\lambda_0 > 0$ and a sub-Markovian resolvent $\left(R_{\alpha}\right)_{\alpha \geq \lambda_0}$ associated with the principal elliptic operator $L^A = {\rm trace}(A \nabla^2)$, and to show that the function $h$ can be represented as the local weak $H^{1,2}$-limit of the family $(\alpha R_{\alpha} h)_{\alpha \geq \lambda_0}$ as $\alpha \rightarrow \infty$. This idea is inspired by \cite[Theorem 2.1]{St99}. In that result, however, the diffusion coefficients $a_{ij}$ are assumed to be locally Hölder continuous for all $1 \leq i,j \leq d$, and the density of the solution measure $\nu$ is assumed to belong to $L^{\infty}(\mathbb{R}^d)$. By contrast, in the present work, we establish resolvent estimates under the weaker assumptions that the coefficients belong to $VMO_{loc}(\mathbb{R}^d)$ and that ${\rm div} A \in L^2_{loc}(\mathbb{R}^d, \mathbb{R}^d)$, which enable us to derive the main result, Theorem \ref{maintheore}.\\
It is worth noting that determining the extent to which the assumptions in our main theorem can be further relaxed remains an open and challenging problem. In particular, it is unclear whether the assumptions that $a_{ij} \in VMO_{loc}(U)$ for all $1 \leq i,j \leq d$ and ${\rm div} A \in L^2_{loc}(U, \mathbb{R}^d)$ can be weakened. Similarly, whether the a priori condition $h \in L^{\infty}_{loc}(U)$ can be replaced by the weaker assumption $h \in L^2_{loc}(U)$ remains open. Furthermore, one may ask whether the result could still hold under the sole assumptions that the solution measure $\nu$ is locally finite and that $Lf \in L^1_{loc}(U, \nu)$ for every $f \in C_0^{\infty}(U)$. Understanding the regularity of solutions to such stationary Fokker-Planck equations is expected to play a crucial role in the study of invariant measures associated with stochastic differential equations having singular coefficients. \\
This paper is organized as follows. Section \ref{sec2} introduces the basic notations and definitions used throughout this paper. Section \ref{sec3} presents the resolvent approaches to elliptic regularity and provides the proof of the main result, Theorem \ref{maintheore}, at the end of the section. Section \ref{sec4} discusses applications to uniqueness results based on the main result and presents some concrete examples.

\section{Notations and definitions} \label{sec2}
For $a,b \in \mathbb{R}$ with $b \leq a$, we write
$a \vee b= b \vee a :=a$,  $a \wedge b = b \wedge a :=b$, $a^+:= a \vee 0$,  $a^-=(-a) \vee 0$.
Let $\mathbb{R}^d$ denote the $d$-dimensional Euclidean space with $d \geq 1$, equipped with the standard inner product $\langle \cdot, \cdot \rangle$ and the corresponding Euclidean norm $\|\cdot\|$. For $A \subset \mathbb{R}^d$, $1_A$ denotes the indicator function of the set $A$, defined by $1_A(x) = 1$ if $x \in A$, and $1_A(x) = 0$ if $x \in \mathbb{R}^d \setminus A$. For any $x_0 \in \mathbb{R}^d$ and $r > 0$, we denote by $B_r(x_0) := \{ x \in \mathbb{R}^d : \|x - x_0\| < r \}$ the open ball centered at $x_0$ with radius $r$. 
The Lebesgue measure on $\mathbb{R}^d$ is denoted by $dx$.  We denote by $\mathcal{B}(\mathbb{R}^d)$ the Borel $\sigma$-algebra on $\mathbb{R}^d$, i.e. the smallest $\sigma$-algebra containing all open subsets of $\mathbb{R}^d$. Let $U$ be an open subset of $\mathbb{R}^d$.
The set of continuous functions on $U$ is denoted by $C(U)$. 
We write $C_0(U)$ to mean the set of continuous functions with compact support in $U$. For $k \in \mathbb{N} \cup \{\infty\}$, the space $C^k(U)$ consists of all functions on $U$ that are $k$-times continuously differentiable. We set $C^k_0(U) := C^k(U) \cap C_0(U)$. 
For $s \in [1,\infty]$, we denote by $L^s(U)$ the space of $dx$-integrable functions on $U$ with exponent $s$, equipped with the standard $L^s$-norm. The space $L^s(U, \mathbb{R}^d)$ refers to the set of vector-valued functions $\mathbf{F}$ with values in $\mathbb{R}^d$ such that $\|\mathbf{F}\| \in L^s(U)$, and we simply write $\|\mathbf{F} \|_{L^s(U)}:=\| \| \mathbf{F} \| \|_{L^s(U)}$. The localized versions, denoted by $L^s_{loc}(U)$ and $L^s_{loc}(U, \mathbb{R}^d)$, consist of functions and vector fields whose restrictions to compact subsets $V$ of $U$ belong to the corresponding $L^s(V)$ and $L^s(V, \mathbb{R}^d)$, respectively.
Given a function $f : U \to \mathbb{R}$, if the weak partial derivative in the $i$-th coordinate direction exists, it is denoted by $\partial_i f$. The Sobolev space $H^{1,s}(U)$ consists of all functions $f \in L^s(U)$ such that each $\partial_i f \in L^s(U)$ for $i = 1, \ldots, d$, equipped with the usual Sobolev norm. The space $H^{1,2}_0(U)$ is defined as the closure of $C_0^\infty(U)$ in $H^{1,2}(U)$. Similarly, $H^{2,s}(U)$ is the Sobolev space of functions $f \in L^s(U)$ such that all first and second-order weak derivatives belong to $L^s(U)$. For such a function $f$, the weak Laplacian is given by $\Delta f := \sum_{i=1}^d \partial_i \partial_i f$, and the weak Hessian matrix is denoted by $\nabla^2 f := (\partial_i \partial_j f)_{1 \leq i,j \leq d}$. 
Finally, for a real $d \times d$ matrix of functions $B = (b_{ij})_{1 \leq i,j \leq d}$, we define its trace as ${\rm trace}(B) := \sum_{i=1}^d b_{ii}$. 
Thus, it holds that
$$
{\rm trace}(B \nabla^2 f) = \sum_{i,j=1}^d b_{ij} \partial_i \partial_j f =\sum_{i,j=1}^d \left(\frac{b_{ij}+b_{ji}}{2} \right) \partial_i \partial_j f.
$$
\begin{defn} \label{defndivmat}
Let $U$ be an open subset of $\mathbb{R}^d$, $\mathbf{E} = (e_1, \dots, e_d) \in L^1_{loc}(U, \mathbb{R}^d)$ and $B = (b_{ij})_{1 \leq i,j \leq d}$ be a (possibly non-symmetric) matrix of functions with $b_{ij} \in L^1_{loc}(U)$ for all $1 \leq i,j \leq d$. Let $\mathbf{F} \in L^1_{loc}(U, \mathbb{R}^d)$ and $f \in L^1_{loc}(U)$.
\begin{itemize}
\item[(i)] We say that ${\rm div} B = \mathbf{E}$ if
\[
\int_U \sum_{i,j=1}^d b_{ij} \, \partial_i \phi_j \, dx = - \int_U \sum_{j=1}^d e_j \, \phi_j \, dx \quad \text{for all } (\phi_1, \dots, \phi_d) \in C_0^\infty(U)^d.
\]
\item[(ii)] We say that ${\rm div} \mathbf{F} = f$ if
\[
\int_U \langle \mathbf{F}, \nabla \varphi \rangle \, dx = - \int_U f \varphi \, dx \quad \text{for all } \varphi \in C_0^\infty(U).
\]
\end{itemize}
\end{defn}
\begin{defn} \label{defnvmo}
Let $\omega$ be a positive continuous function on $[0,\infty)$ with $\omega(0) = 0$. The space $VMO_\omega$ consists of all functions $g \in L^1_{loc}(\mathbb{R}^d)$ such that
\[
\sup_{z \in \mathbb{R}^d,\, r < R} \, r^{-2d} \int_{B_r(z)} \int_{B_r(z)} |g(x) - g(y)| \, dx \, dy \leq \omega(R) \quad \text{for all } R \in (0,\infty).
\]
Let $B$ be an open ball in $\mathbb{R}^d$. We define $VMO(B)$ as the set of all functions $f \in L^1(B)$ for which there exists an extension $\tilde{f} \in VMO_{\omega}$  (for some modulus $\omega$) such that $\tilde{f}|_B = f$. For an open set $U \subset \mathbb{R}^d$, the space $VMO_{loc}(U)$ is defined as the collection of all functions $f \in L^1_{loc}(U)$ such that $f \in VMO(B)$ for every open ball $B \subset \mathbb{R}^d$ with $\overline{B} \subset U$.
\end{defn}
It is well known that
\[
C_0(\mathbb{R}^d) \cup H^{1,d}(\mathbb{R}^d) \subset VMO_{\omega}
\quad \text{(see \cite[Proposition 2.1]{L23ID})}.
\]
Moreover, using standard extension arguments, one obtains
\[
C(U) \cup H^{1,d}_{loc}(U) \subset VMO_{loc}(U)
\quad \text{for any open set } U \subset \mathbb{R}^d.
\]
By the definition of $VMO_{loc}(U)$, we have that if $f, g \in VMO_{loc}(U)$, then $f + g \in VMO_{loc}(U)$. Moreover, \cite[Proposition 5.3]{L25bv} shows that $VMO_{loc}(U) \cap L^{\infty}_{loc}(U)$ is closed under pointwise multiplication.

\section{Resolvent approaches to elliptic regularity} \label{sec3} 
In this section, we mainly consider the following condition {\bf (Hypo)} in Theorems \ref{maintheolem} and \ref{submaintheor}. \\ \\
{\bf (Hypo)}: $d \geq 2$, and $A=(a_{ij})_{1 \leq i,j \leq d}$ is a symmetric matrix of functions on $\mathbb{R}^d$, satisfying the following condition for some constants $\lambda, M > 0$:
\begin{equation} \label{ellipticit}
\lambda \| \xi \|^2 \leq \langle A(x) \xi, \xi \rangle, \quad \;\; \max_{1 \leq i,j \leq d} |a_{ij}(x)| \leq M
\quad \text{ for a.e. } x \in \mathbb{R}^d \text{ and all } \xi \in \mathbb{R}^d.
\end{equation}
Moreover, for all $1 \leq i,j \leq d$, $a_{ij} \in VMO_{\omega}$, where $\omega$ is a positive continuous function on $[0, \infty)$ with $\omega(0) = 0$.

\begin{lem} \label{auxilemplus}
Let $v \in H^{2,1}(U) \cap H^{1,2}(U) \cap L^{\infty}(U)$, $c \in L^1(U)$  and $A=(a_{ij})_{1 \leq i,j \leq d}$ be a (possibly non-symmetric) matrix of smooth functions such that
\begin{equation} \label{varidenegat}
\int_{U} \langle A \nabla v, \nabla \varphi \rangle +\langle  {\rm div}A, \nabla v \rangle \varphi + cv \varphi \,dx  \leq 0 \quad \text{ for all $\varphi \in C_0^{\infty}(U)$ with $\varphi \geq 0$}.
\end{equation}
Then, 
$$
\int_{U} \langle A \nabla v^+, \nabla \varphi \rangle +\langle  {\rm div}A, \nabla v^+ \rangle \varphi + cv^+ \varphi \,dx  \leq 0 \quad \text{ for all $\varphi \in C_0^{\infty}(U)$ with $\varphi \geq 0$}.
$$
\end{lem}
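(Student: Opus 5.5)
The idea is to turn the variational inequality \eqref{varidenegat} into a pointwise differential inequality, apply a Kato-type convexity argument pointwise, and then integrate back. We must assume, as in all applications of the lemma, that $\langle A(x)\xi,\xi\rangle \ge 0$ for all $\xi$. Without this the statement fails already for $A=-\mathrm{id}$, since the convexity term below would then have the wrong sign.

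\emph{Step 1 (pointwise form).} Since $A$ is smooth, $v \in H^{2,1}(U)$ and $\varphi$ has compact support, integration by parts is justified. With $({\rm div}A)_j=\sum_i \partial_i a_{ij}$ as in Definition~\ref{defndivmat}, it gives
\[
\int_U \langle A\nabla v,\nabla\varphi\rangle\,dx=\int_U \sum_{i,j} a_{ij}\partial_j v\,\partial_i\varphi\,dx = -\int_U \big(\langle {\rm div}A,\nabla v\rangle+{\rm trace}(A\nabla^2 v)\big)\varphi\,dx .
\]
The drift terms therefore cancel, and \eqref{varidenegat} reads $\int_U(-{\rm trace}(A\nabla^2 v)+cv)\varphi\,dx\le 0$ for all $\varphi\ge 0$. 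Since both sides are in $L^1_{loc}$, this gives
\[
{\rm trace}(A\nabla^2 v)\ge cv \quad \text{a.e. in } U.
\]
Conversely, the same computation shows that for any $w\in H^{2,1}_{loc}$ with $\nabla w \in L^1_{loc}$ the bilinear expression in the conclusion equals $\int_U(-{\rm trace}(A\nabla^2 w)+cw)\varphi\,dx$.

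\emph{Step 2 (convex regularization).} Fix convex functions $\psi_\varepsilon\in C^2(\mathbb{R})$ with the following properties:
\begin{itemize}
\item $\psi_\varepsilon=0$ on $(-\infty,0]$;
\item $0\le\psi_\varepsilon'\le 1$ and $\psi_\varepsilon''\ge 0$ bounded;
\item $\psi_\varepsilon(t)\to t^+$ and $\psi_\varepsilon'(t)\to 1_{(0,\infty)}(t)$ as $\varepsilon\to0$.
\end{itemize}
Since $\nabla v\in L^2$ and $\psi_\varepsilon''$ is bounded, the chain rule gives $\psi_\varepsilon(v)\in H^{2,1}_{loc}(U)$ with
\[
\partial_i\partial_j\psi_\varepsilon(v)=\psi_\varepsilon'(v)\partial_i\partial_j v+\psi_\varepsilon''(v)\partial_iv\,\partial_jv .
\]
This requires checking the chain rule in $H^{2,1}$, via mollification of $v$ and dominated convergence; that is a routine but necessary point. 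Hence a.e.
\[
{\rm trace}(A\nabla^2\psi_\varepsilon(v))=\psi_\varepsilon'(v)\,{\rm trace}(A\nabla^2 v)+\psi_\varepsilon''(v)\langle A\nabla v,\nabla v\rangle \ge \psi_\varepsilon'(v)\,cv ,
\]
using Step 1, $\psi'_\varepsilon\ge0$ and the semidefiniteness of $A$. By the converse part of Step 1 applied to $w=\psi_\varepsilon(v)$,
\[
\int_U \langle A\nabla \psi_\varepsilon(v),\nabla\varphi\rangle+\langle{\rm div}A,\nabla\psi_\varepsilon(v)\rangle\varphi\,dx\le -\int_U \psi_\varepsilon'(v)\,cv\,\varphi\,dx .
\]

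\emph{Step 3 (limit $\varepsilon\to0$).} We pass to the limit term by term:
\begin{itemize}
\item $\nabla\psi_\varepsilon(v)=\psi_\varepsilon'(v)\nabla v\to 1_{\{v>0\}}\nabla v=\nabla v^+$ in $L^2_{loc}$, by dominated convergence with dominant $|\nabla v|$. This handles both gradient terms, since $A$ and ${\rm div}A$ are smooth and hence bounded on ${\rm supp}\,\varphi$.
\item $\psi_\varepsilon'(v)cv\to cv^+$ in $L^1_{loc}$, dominated by $|c|\,\|v\|_{L^\infty}$, because $c\in L^1$ and $v\in L^\infty$.
\end{itemize}
Moving the right-hand side to the left then yields the claimed inequality for $v^+$.

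The main obstacle is Step 2. One must justify the second-order chain rule for $H^{2,1}\cap H^{1,2}$ functions, and, essentially, one needs the sign of $\psi_\varepsilon''(v)\langle A\nabla v,\nabla v\rangle$. A purely first-order approach, testing with $\psi'_\varepsilon(v)\varphi$, still produces this term, and it does not vanish as $\varepsilon\to0$ in general, since by coarea it concentrates on the level set $\{v=0\}$. So positivity of the form $\langle A\xi,\xi\rangle$ is genuinely used.
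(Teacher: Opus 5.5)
Your proof is correct and follows essentially the paper's route. Both arguments reduce \eqref{varidenegat} to ${\rm trace}(A\nabla^2 v)\ge cv$ a.e., compose $v$ with a convex regularization of $t\mapsto t^+$, use the second-order chain rule and the sign of $\psi_\varepsilon''(v)\langle A\nabla v,\nabla v\rangle$, and then pass to the limit. The only difference is that you take $\psi_\varepsilon\in C^2$ directly, while the paper uses the $C^{1,1}$ function $f_\varepsilon$, mollifies it, and so needs an extra limit $k\to\infty$. You are also right that $\langle A\xi,\xi\rangle\ge 0$ must be added as a hypothesis: the paper silently uses it when dropping $-f''_{\varepsilon,k}(v)\langle A\nabla v,\nabla v\rangle\varphi$, and it does hold for the uniformly elliptic $A_n$ in Theorem \ref{maintheolem}(ii).
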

\begin{proof}
Define the partial differential operator $\mathcal{L}$ by
$$
\mathcal{L}f := -{\rm trace}(A \nabla^2 f) + cf, \quad f \in H^{2,1}(U) \text{ with } \text{supp}(f) \subset U.
$$
Applying integration by parts to \eqref{varidenegat}, we obtain
$$
\int_{U}\mathcal{L}v \cdot \varphi\,dx \leq 0\quad \text{for all } \varphi \in C_0^{\infty}(U) \text{ with } \varphi \geq 0,
$$
and hence
\begin{equation} \label{operatorlnega}
\mathcal{L}v \leq 0 \quad \text{in } U.
\end{equation}
Define
\[
f_\varepsilon(t) := 
\begin{cases}
\sqrt{t^2 + \varepsilon^2} - \varepsilon & \text{if } t \geq 0, \\
0 & \text{if } t < 0.
\end{cases}
\]
Then $f_\varepsilon \in C^1(\mathbb{R})$, and $f'_\varepsilon \in H^{1,\infty}(\mathbb{R}) \cap C(\mathbb{R})$. Moreover, we have
\[
f'_\varepsilon(t) = 
\begin{cases}
\dfrac{t}{\sqrt{t^2 + \varepsilon^2}} & \text{if } t \geq 0, \\
0 & \text{if } t < 0,
\end{cases}
\quad \text{and} \quad
f''_\varepsilon(t) = 
\begin{cases}
\dfrac{\varepsilon^2}{(t^2 + \varepsilon^2)^{3/2}} & \text{if } t > 0, \\
0 & \text{if } t < 0.
\end{cases}
\]
It then follows that
\begin{equation} \label{fepsilonletze}
\lim_{\varepsilon \to 0^+} f_\varepsilon(t) = t^+, \quad \text{and} \quad \lim_{\varepsilon \to 0^+} f'_\varepsilon(t) =1_{(0,\infty)}(t) \quad \text{for all } t \in \mathbb{R}.
\end{equation}
Meanwhile, by \cite[Theorem 4.4]{EG15}, we have $v^+ \in H^{1,2}(U)$ and $\nabla v^+ = 1_{(0, \infty)} (v) \nabla v$ in $U$. Therefore, by the Lebesgue dominated convergence theorem and the chain rule,
\begin{equation} \label{approxih12}
\lim_{\varepsilon \rightarrow 0+} f_{\varepsilon}(v) = v^+ \quad \text{in } H^{1,2}(U).
\end{equation}
Let $\phi$ be a standard mollifier on $\mathbb{R}$, and for each $k \in \mathbb{N}$, define $\phi_k(t) = k\phi(kt)$ for $t \in \mathbb{R}$.
Let \( f_{\varepsilon,k} := f_\varepsilon * \phi_k \in C^{\infty}(\mathbb{R}) \). Then,
\begin{equation} \label{fvarepksign}
 f_{\varepsilon,k}'(t) \geq 0 \quad \text{and} \quad f_{\varepsilon,k}''(t) \geq 0 \quad \text{for all } t \in \mathbb{R}.
\end{equation}
Moreover, since $f_{\varepsilon}$ and $f'_{\varepsilon}$ are continuous on $\mathbb{R}$ and $v \in L^{\infty}(U)$, we obtain
\begin{equation} \label{letingkfkep}
\lim_{k \to \infty} f_{\varepsilon,k}(v) = f_\varepsilon(v) \quad \text{and} \quad 
\lim_{k \to \infty} f_{\varepsilon,k}'(v) = f_\varepsilon'(v) \quad \text{uniformly on } U.
\end{equation}
Therefore, by the Lebesgue dominated convergence theorem and the chain rule,
\begin{equation} \label{letingkfkepnex}
\lim_{k \to \infty} f_{\varepsilon, k}(v) = f_{\varepsilon}(v) \quad \text{in } H^{1,2}(U).
\end{equation}
Applying integration by parts and using \eqref{operatorlnega} and \eqref{fvarepksign}, we find
\begin{align*}
&\int_{U} \langle A \nabla f_{\varepsilon,k}(v), \nabla \varphi \rangle + \langle {\rm div}A, \nabla f_{\varepsilon,k}(v) \rangle \varphi + cf_{\varepsilon,k}(v) \varphi \,dx  \\
&= \int_{U} \Big( -f'_{\varepsilon, k}(v){\rm trace}(A \nabla^2 v) - f_{\varepsilon,k}''(v)\langle A \nabla v, \nabla v \rangle + cf_{\varepsilon,k}(v) \Big) \varphi \,dx \\
&= \int_{U} f'_{\varepsilon, k}(v) \mathcal{L}v \cdot \varphi - f_{\varepsilon,k}''(v)\langle A \nabla v, \nabla v \rangle \varphi + \big( -c f_{\varepsilon, k}'(v) v + c f_{\varepsilon, k}(v) \big) \varphi\,dx \\
&\leq \int_{U} \big( -c f_{\varepsilon, k}'(v) v + c f_{\varepsilon, k}(v) \big) \varphi \,dx \quad \text{for all } \varphi \in C_0^{\infty}(U) \text{ with } \varphi \geq 0.
\end{align*}
Letting $k \rightarrow \infty$, it follows from \eqref{letingkfkepnex} and \eqref{letingkfkep} that
\begin{align*}
&\int_{U} \langle A \nabla f_{\varepsilon}(v), \nabla \varphi \rangle + \langle {\rm div}A, \nabla f_{\varepsilon}(v) \rangle \varphi + cf_{\varepsilon}(v) \varphi \,dx \\
&\leq \int_{U} \big( -c f_{\varepsilon}'(v) v + c f_{\varepsilon}(v) \big) \varphi\,dx \quad \text{for all } \varphi \in C_0^{\infty}(U) \text{ with } \varphi \geq 0.
\end{align*}
Finally, letting $\varepsilon \rightarrow 0^+$ and applying \eqref{approxih12}, \eqref{fepsilonletze}, and the Lebesgue dominated convergence theorem, we conclude
\begin{align*}
&\int_{U} \langle A \nabla v^+, \nabla \varphi \rangle + \langle {\rm div}A, \nabla v^+ \rangle \varphi + cv^+ \varphi \,dx \\
&\qquad \leq \int_{U} \big( -c 1_{(0, \infty)}(v) v + cv^+ \big) \varphi\,dx = 0 \quad \text{for all } \varphi \in C_0^{\infty}(U) \text{ with } \varphi \geq 0,
\end{align*}
as desired.
\end{proof}

\begin{theo} \label{maintheolem}
Assume {\bf (Hypo)}. Let $B$ be an open ball in $\mathbb{R}^d$ and let $f \in L^s(B)$ with $s \in (1, \infty)$. Define the second-order partial differential operator $L^A$ by
\begin{equation} \label{nondivpardif}
L^A u := {\rm trace} (A \nabla^2 u), \quad u \in H^{2,1}(B).
\end{equation}
Then the following assertions hold:
\begin{itemize}
\item[(i)]
There exists a constant $\lambda_0 \in (0, \infty)$ depending only on 
$d, \lambda, M, B, s$ and $\omega$ such that  for every $\alpha \geq \lambda_0$, there exists a unique function $R_{\alpha} f \in H^{2,s}(B) \cap H^{1,s}_0(B)$ satisfying
\begin{equation} \label{operatorigor}
(\alpha - L^A) R_{\alpha} f = f \quad \text{in } B.
\end{equation}
Moreover, for such $\alpha \geq \lambda_0$, there exists a constant $C > 0$ depending only on $d, \lambda, M, B, s, \omega$ and $\alpha$ such that
$$
\|R_{\alpha} f\|_{H^{2,s}(B)} \leq C\|f\|_{L^s(B)}.
$$

\item[(ii)]
Assume $f \in L^{\infty}(B)$ and $s \in (d, \infty)$. Let $\lambda_0 > 0$ be as in (i), and let $\alpha > \lambda_0$. Then the solution $R_{\alpha} f \in H^{2,s}(B) \cap C(\overline{B})$ obtained in (i) satisfies
$$
\|\alpha R_{\alpha} f\|_{L^{\infty}(B)} \leq \|f\|_{L^{\infty}(B)}.
$$
\end{itemize}
\end{theo}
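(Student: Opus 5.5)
For (i), I would not build the $L^s$ theory of $L^A$ with VMO coefficients from scratch. Instead I would invoke the known $W^{2,s}$-solvability of the Dirichlet problem for nondivergence operators with $VMO$ leading coefficients on smooth bounded domains (Chiarenza--Frasca--Longo type results, or the Krylov/Dong--Kim results for $VMO_x$ coefficients). These give, for operators $\alpha - L^A$ with $\alpha \ge 0$ (or $\alpha$ large), existence and uniqueness of $u \in H^{2,s}(B)\cap H^{1,s}_0(B)$ solving $(\alpha - L^A)u=f$, together with
$$
\|u\|_{H^{2,s}(B)} \le C(\|f\|_{L^s(B)}+\|u\|_{L^s(B)}).
$$
Here $C$ depends only on $d,\lambda,M,B,s,\omega$.

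The plan has three steps. First, I would establish the a priori estimate: freeze coefficients on small balls, whose radius is chosen via $\omega$ so that the oscillation is small; use the Calder\'on--Zygmund estimate for constant coefficient operators, where the commutator terms are controlled by the smallness of the $BMO$ seminorm; then patch the local estimates together with a partition of unity and a boundary flattening. Second, I would absorb the lower-order term $\|u\|_{L^s}$. For $\alpha$ large, the Agmon-type argument (adding a dummy variable $e^{i\sqrt{\alpha}t}$ cutoff) yields
$$
\alpha\|u\|_{L^s}+\sqrt{\alpha}\|\nabla u\|_{L^s}+\|\nabla^2 u\|_{L^s}\le C\|f\|_{L^s},
$$
and this is where $\lambda_0$ comes from. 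Third, uniqueness follows immediately from the estimate. Existence follows by the method of continuity from $\alpha-\Delta$, using the uniform estimate along the path $tA+(1-t)I$. Note that the VMO modulus of this convex combination is still controlled by $\omega$, and the ellipticity constant is $\min(\lambda,1)$.

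For (ii), set $u=\alpha R_\alpha f - \|f\|_{L^\infty(B)}$. Since $s>d$, Sobolev embedding gives $R_\alpha f\in C(\overline B)$, and $u\le 0$ on $\partial B$. Moreover
$$
(\alpha-L^A)u = \alpha f-\alpha\|f\|_{L^\infty}\le 0 .
$$
The conclusion $u\le0$ then follows from the Aleksandrov--Bakelman--Pucci maximum principle for strong solutions in $H^{2,d}_{loc}$ (valid since $s>d$). Concretely, on the open set $\{u>0\}$ one has $L^A u \ge \alpha u >0$, and ABP gives
$$
\sup u^+ \le \sup_{\partial B}u^+ + C\|(L^Au)^-\|_{L^d(\{u>0\})}=0 .
$$
Applying the same argument to $-f$ gives the lower bound, hence $\|\alpha R_\alpha f\|_{L^\infty}\le\|f\|_{L^\infty}$.

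Alternatively, one could stay closer to the paper and use Lemma \ref{auxilemplus}. Mollify $A$ to $A_n$ and solve with $A_n$; for smooth coefficients one can write $-L^{A_n}$ in divergence form with the term $\langle {\rm div}A_n,\nabla\cdot\rangle$, take $c=\alpha$, and test the resulting inequality for $u^+$ against $u^+$ itself, which lies in $H^{1,2}_0$. This gives $u^+=0$, and then one passes $n\to\infty$ using the uniform $H^{2,s}$ bound from (i) together with stability of the VMO modulus under mollification.

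The main obstacle is (i): getting $\lambda_0$ and $C$ to depend only on the stated quantities and not on finer properties of $A$. I would rely on citing the VMO $L^s$-theory with $\omega$-dependent constants rather than reproving it.
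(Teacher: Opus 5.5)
Your proposal is correct. For (i) it matches the paper, which also just cites the $L^s$-theory for nondivergence operators with VMO leading coefficients (Dong--Kim). Your sketch of freezing coefficients, Agmon's trick and continuity along $tA+(1-t)\,id$ is the standard content behind that citation.

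For (ii) your main argument takes a genuinely different route. The paper does not use the Aleksandrov--Bakelman--Pucci principle. Instead it proceeds as follows:
\begin{itemize}
\item It mollifies $A$ to $A_n=A*\eta_n$ and checks that mollification preserves the VMO modulus $\omega$, so the constants of (i) are uniform in $n$.
\item It rewrites $\alpha-L^{A_n}$ in divergence form with the first-order term $\langle {\rm div}A_n,\nabla\cdot\rangle$.
\item It transfers the differential inequality from $v=R_{n,\alpha}f-\frac1\alpha\|f\|_{L^\infty(B)}$ to $v^+$ via Lemma~\ref{auxilemplus}.
\item It applies a weak maximum principle for divergence-form operators to get $\|\alpha R_{n,\alpha}f\|_{L^\infty(B)}\le\|f\|_{L^\infty(B)}$.
\item It passes to the limit by showing $R_{n,\alpha}f\to R_\alpha f$ in $H^{2,s}(B)$, using $(\alpha-L^{A_n})(R_\alpha f-R_{n,\alpha}f)=\sum_{i,j}(a_{ij}-a_{n,ij})\partial_i\partial_j R_\alpha f$ and dominated convergence.
\end{itemize}
Your route uses that, for $s>d$, $u=\alpha R_\alpha f-\|f\|_{L^\infty(B)}$ is a strong $H^{2,d}(B)\cap C(\overline B)$ solution of $L^Au-\alpha u\ge 0$ with $u\le 0$ on $\partial B$. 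Aleksandrov's maximum principle holds for merely measurable, bounded, uniformly elliptic coefficients with nonpositive zero-order term. So you need no approximation, no modulus-preservation claim and no auxiliary lemma. The paper accepts the approximation argument in exchange for using only variational (divergence-form) tools. Yours is shorter but rests on the deeper ABP estimate.

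One step in your \emph{alternative} route is wrong as stated: testing the inequality for $u^+$ against $u^+$ itself does not give $u^+=0$. It yields
\[
\int_B\langle A_n\nabla u^+,\nabla u^+\rangle\,dx+\int_B\langle{\rm div}A_n,\nabla u^+\rangle\, u^+\,dx+\alpha\int_B (u^+)^2\,dx\le 0 .
\]
The middle term has no sign, and absorbing it by Cauchy--Schwarz and Young needs $\|{\rm div}A_n\|_{L^\infty(B)}^2<4\lambda\alpha$. Under \textbf{(Hypo)} nothing is assumed on ${\rm div}A$, so $\|{\rm div}A_n\|_{L^\infty(B)}$ may blow up as $n\to\infty$ while $\alpha$ stays fixed. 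This is why the paper invokes a genuine weak maximum principle (of Gilbarg--Trudinger Theorem 8.1 type). That principle tolerates arbitrary bounded first-order coefficients, provided the zero-order term has the right sign. Since your primary argument for (ii) is the ABP one, this does not affect the validity of your proof.
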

\begin{proof}
(i) The assertion follows from \cite[Theorem 8]{DK11} (cf. \cite{V92}). \\ \\
(ii) Let $\eta$ be a standard mollifier on $\mathbb{R}^d$, and for each $n \geq 1$, define $\eta_n \in C_0^{\infty}(B_{1/n})$ by $\eta_n(x) := n^d \eta(nx)$ for $x \in \mathbb{R}^d$. \\ \\
{\sf \underline{Claim}:} If $g \in VMO_{\omega}$, then $g * \eta_n \in VMO_{\omega}$ for all $n \geq 1$. \\ \\
To prove the claim, let $n \geq 1, z \in \mathbb{R}^d$, $R > 0$ and $r \in (0, R)$. % and $x, y \in B_r(z)$. 
Then,
\begin{align*}
&r^{-2d}\int_{B_r(z)}\int_{B_r(z)}|g* \eta_n(x)-g*\eta_n(y)| \,dxdy \\
&= r^{-2d}\int_{B_r(z)}\int_{B_r(z)} \left| \int_{\mathbb{R}^d} \left(g(x-\xi)- g(y-\xi) \right) \eta_n(\xi)	d\xi \right| \,dxdy \\
& \leq \int_{\mathbb{R}^d}\left( r^{-2d}\int_{B_r(z)} \int_{B_r(z)}\left| g(x-\xi)- g(y-\xi) \right| \,dx dy \right) \eta_n(\xi)	d\xi \\
& = \int_{\mathbb{R}^d}\left( r^{-2d}\int_{B_r(z-\xi)} \int_{B_r(z-\xi)}\left| g(x')- g(y') \right| \,dx' dy' \right) \eta_n(\xi)	d\xi \\
&\leq \int_{\mathbb{R}^d} \omega(R) \eta_n(\xi) d\xi = \omega(R).
\end{align*}
Therefore,
\[
\sup_{z \in \mathbb{R}^d,\, r < R} \, r^{-2d} \int_{B_r(z)} \int_{B_r(z)} |g*\eta_n(x) - g*\eta_n(y)| \, dx \, dy \leq \omega(R) \quad \text{for all } R \in (0,\infty),
\]
and thus the claim is proved. \\ \\
Now, for each $n \geq 1$ and $1 \leq i,j \leq d$, define $a_{n, ij} := a_{ij} * \eta_n \in C^{\infty}(\mathbb{R}^d)$. Then, by the claim, $a_{n, ij} \in VMO_{\omega}$ for all $n \geq 1$ and $1 \leq i,j \leq d$. Let $A_n := (a_{n, ij})_{1 \leq i,j \leq d}$, $n \geq 1$. Then, it holds that
\[
\lambda \| \xi \|^2 \leq \langle A_n(x) \xi, \xi \rangle, \quad \;\; \max_{1 \leq i,j \leq d} |a_{n, ij}(x)| \leq M
\quad \text{ for a.e. } x \in \mathbb{R}^d \text{ and all } \xi \in \mathbb{R}^d
\]
and
\begin{equation} \label{almoconver}
\lim_{n \rightarrow \infty} a_{n, ij} = a_{ij} \quad \text{a.e. in } \mathbb{R}^d \quad \text{for all } 1 \leq i,j \leq d.
\end{equation}
Now, let $s \in (d, \infty)$, $f \in L^{\infty}(B)$, and let $\lambda_0 > 0$ be the constant from part (i), with $\alpha > \lambda_0$. Then, by (i), there exist unique functions $R_{n, \alpha} f, R_{\alpha} f \in H^{2,s}(B) \cap H^{1,2}_0(B) \cap C(\overline{B})$ satisfying
\begin{equation} \label{resolequn}
(\alpha - L^{A_n}) R_{n, \alpha} f = f \quad \text{in } B,
\end{equation}
and also \eqref{operatorigor} holds. Moreover, by (i), there exists a constant $\tilde{C} > 0$ depending only on $d, \lambda, M, B, s, \omega, \alpha$ (independent of $n$ and $f$) such that
\begin{equation*} %\label{basich2estimate}
\|R_{n, \alpha} f\|_{H^{2,s}(B)} \leq \tilde{C}\|f\|_{L^s(B)},
\end{equation*}
\begin{equation} \label{basich2estorigi}
\|R_{\alpha} f\|_{H^{2,s}(B)} \leq \tilde{C}\|f\|_{L^s(B)}.
\end{equation}
Applying integration by parts to \eqref{resolequn}, we get
\begin{equation}\label{rnalphaequa}
\int_{B} \langle A_n \nabla R_{n, \alpha} f, \nabla \varphi \rangle + \langle {\rm div}A_n, \nabla R_{n, \alpha}f \rangle \varphi + (\alpha R_{n, \alpha} f) \varphi \,dx = \int_{B} f \varphi \,dx \quad \text{ for all $\varphi \in C_0^{\infty}(B)$.}
\end{equation}
Define $v := R_{n, \alpha} f - \frac{1}{\alpha} \|f\|_{L^{\infty}(B)} \in H^{2,s}(B) \cap H^{1,2}(B) \cap C(\overline{B})$. Then $v = -\frac{1}{\alpha} \|f\|_{L^{\infty}(B)}$ on $\partial B$. Substituting $R_{n, \alpha} f = v + \frac{1}{\alpha} \|f\|_{L^{\infty}(B)}$ into \eqref{rnalphaequa} gives
\begin{align*}
&\int_{B} \langle A_n \nabla v, \nabla \varphi \rangle + \langle {\rm div}A_n, \nabla v \rangle \varphi + \alpha v \varphi \,dx \\
& \qquad = \int_{B} (f - \|f\|_{L^{\infty}(B)}) \varphi \,dx \leq 0 \quad \text{ for all $\varphi \in C_0^{\infty}(B)$ with $\varphi \geq 0$.}
\end{align*}
By \cite[Theorem 4.4(iii)]{EG15}, $v^+ \in H^{1,2}(B) \cap C(\overline{B})$ with $v^+ = 0$ on $\partial B$, hence $v^+ \in H^{1,2}_0(B)$. Then by Lemma \ref{auxilemplus},
\begin{align*}
\int_{B} \langle A_n \nabla v^+, \nabla \varphi \rangle + \langle {\rm div}A_n, \nabla v^+ \rangle \varphi + \alpha v^+ \varphi \,dx \leq 0 \quad \text{ for all $\varphi \in C_0^{\infty}(B)$ with $\varphi \geq 0$. }
\end{align*}
By the weak maximum principle \cite[Theorem 2.1.8]{BKRS15} (cf. \cite{Tma}), it follows that $v^+ \leq 0$ in $B$, and hence $v \leq 0$ in $B$. Thus,
\begin{equation} \label{submarkresolv}
R_{n, \alpha} f \leq \frac{1}{\alpha} \|f\|_{L^{\infty}(B)} \quad \text{in } B.
\end{equation}
Similarly, define $w := -R_{n, \alpha} f - \frac{1}{\alpha} \|f\|_{L^{\infty}(B)} \in H^{2,2}(B) \cap H^{1,2}(B) \cap C(\overline{B})$. Then $w = -\frac{1}{\alpha} \|f\|_{L^{\infty}(B)}$ on $\partial B$. By \cite[Theorem 4.4(iii)]{EG15}, $w^+ \in H^{1,2}(B) \cap C(\overline{B})$ with $w^+ = 0$ on $\partial B$, hence $w^+ \in H^{1,2}_0(B)$. 
Substituting $R_{n, \alpha} f = -w - \frac{1}{\alpha} \|f\|_{L^{\infty}(B)}$ into \eqref{rnalphaequa} yields
\begin{align*}
&\int_{B} \langle A_n \nabla w, \nabla \varphi \rangle + \langle {\rm div}A_n, \nabla w \rangle \varphi + \alpha w \varphi \,dx \\
& \qquad = \int_{B} (-f - \|f\|_{L^{\infty}(B)}) \varphi \,dx \leq 0 \quad \text{ for all $\varphi \in C_0^{\infty}(B)$ with $\varphi \geq 0$.}
\end{align*}
Similarly to the above, applying Lemma \ref{auxilemplus} and the weak maximum principle \cite[Theorem 2.1.8]{BKRS15} gives $w \leq 0$ in $B$, hence
\begin{equation} \label{submarkresolw}
-\frac{1}{\alpha} \|f\|_{L^{\infty}(B)} \leq R_{n, \alpha} f \quad \text{in } B.
\end{equation}
Combining \eqref{submarkresolv} and \eqref{submarkresolw}, we obtain
\begin{equation} \label{submarkestimresol}
\|R_{n, \alpha} f \|_{L^{\infty}(B)} \leq \frac{1}{\alpha} \|f\|_{L^{\infty}(B)}.
\end{equation}
Meanwhile, by \eqref{operatorigor} and \eqref{resolequn},
\begin{equation} \label{resunsubtrac}
(\alpha - L^{A_n}) (R_{\alpha} f - R_{n, \alpha} f) = \sum_{i,j=1}^d (a_{ij} - a_{n, ij}) \partial_i \partial_j R_{\alpha} f \quad \text{in } B.
\end{equation}
Applying (i) to \eqref{resunsubtrac}, we get
\begin{equation} \label{estimapple}
\| R_{\alpha} f - R_{n, \alpha} f \|_{H^{2,s}(B)} \leq \tilde{C} \sum_{i,j=1}^d \| (a_{ij} - a_{n, ij}) \partial_i \partial_j R_{\alpha} f \|_{L^s(B)}.
\end{equation}
Using \eqref{basich2estorigi},
\[
\| \partial_i \partial_j R_{\alpha} f \|_{L^s(B)} \leq \tilde{C}\|f\|_{L^s(B)}.
\]
Hence, by \eqref{almoconver} and the Lebesgue dominated convergence theorem applied to \eqref{estimapple},
\begin{equation} \label{h2converge}
\lim_{n \rightarrow \infty} R_{n, \alpha} f = R_{\alpha} f \quad \text{in } H^{2,s}(B).
\end{equation}
Finally, by \eqref{h2converge} and the Banach–Alaoglu theorem (cf. \cite[Theorem 3.16]{Br11}) applied to \eqref{submarkestimresol}, there exists a subsequence of $(R_{n, \alpha} f)_{n \geq 1}$ (still denoted by the same) such that $R_{n, \alpha} f \to R_{\alpha} f$ weakly-* in $L^{\infty}(B)$, and hence by \cite[Proposition 3.13(iii)]{Br11},
\[
\| R_{\alpha} f \|_{L^{\infty}(B)} \leq \liminf_{n \to \infty} \| R_{n, \alpha} f \|_{L^{\infty}(B)} \leq \frac{1}{\alpha} \|f\|_{L^{\infty}(B)}.
\]
This completes the proof.
\end{proof}

\begin{lem} \label{coercievforml}
Let $A = (a_{ij})_{1 \leq i,j \leq d}$ be a (possibly non-symmetric) matrix of functions on $\mathbb{R}^d$ satisfying \eqref{ellipticit}. Let $B$ be an open ball, and define 
$(\mathcal{E}^0, D(\mathcal{E}^0))$ as the coercive form obtained as the closure of
\begin{equation} \label{closablema}
\mathcal{E}^0(f, g) = \int_{B} \langle A \nabla f, \nabla g \rangle \,dx, \quad f, g \in C_0^{\infty}(B).
\end{equation}
Let $(L^0, D(L^0))$ denote the generator associated with $(\mathcal{E}^0, D(\mathcal{E}^0))$. Then, the following hold:
\begin{itemize}
\item[(i)]
$D(\mathcal{E}^0) = H^{1,2}_0(B)$.
\item[(ii)]
Let $p \in (d, \infty)$ and assume ${\rm div} A \in L^2(B, \mathbb{R}^d)$. If $f \in H^{2,p}(B) \cap H^{1,2}_0(B)$, then $f \in D(L^0)$ and
\begin{equation} \label{claimediden}
L^0 f = {\rm trace}(A \nabla^2 f) + \langle {\rm div}A, \nabla f \rangle.
\end{equation}
\end{itemize}
\end{lem}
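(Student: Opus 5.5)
For (i), I would use that the bilinear form in \eqref{closablema} is equivalent, on $C_0^\infty(B)$, to the Dirichlet inner product. By \eqref{ellipticit}, $\mathcal{E}^0(f,f) \geq \lambda \|\nabla f\|_{L^2(B)}^2$. By boundedness of the $a_{ij}$, $|\mathcal{E}^0(f,g)| \leq dM \|\nabla f\|_{L^2(B)}\|\nabla g\|_{L^2(B)}$. So the norm $\mathcal{E}^0_1(f,f)^{1/2} := (\mathcal{E}^0(f,f)+\|f\|_{L^2(B)}^2)^{1/2}$ is equivalent to the $H^{1,2}(B)$-norm on $C_0^\infty(B)$, with constants depending only on $\lambda, M, d$.

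The form is therefore closable in $L^2(B)$: an $\mathcal{E}^0_1$-Cauchy sequence converging to $0$ in $L^2$ is $H^{1,2}$-Cauchy, so its gradients converge to $0$. The weak sector condition follows from the same bounds. The closure has domain equal to the $H^{1,2}$-closure of $C_0^\infty(B)$, which is $H^{1,2}_0(B)$ by definition. The extension is $\mathcal{E}^0(f,g)=\int_B\langle A\nabla f,\nabla g\rangle dx$ for $f,g\in H^{1,2}_0(B)$, by continuity.

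For (ii), recall that $f\in D(L^0)$ with $L^0 f = u$ iff $f \in D(\mathcal{E}^0)$, $u\in L^2(B)$, and $\mathcal{E}^0(f,g) = -\int_B u g\,dx$ for all $g\in D(\mathcal{E}^0)$. Since $f \in H^{1,2}_0(B)=D(\mathcal{E}^0)$ by (i), I would set
$u := {\rm trace}(A\nabla^2 f)+\langle {\rm div}A,\nabla f\rangle$
and proceed in three steps.

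First, $u\in L^2(B)$. Here $p>d\ge 2$ gives $\nabla^2 f\in L^p(B)\subset L^2(B)$ and the $a_{ij}$ are bounded. Morrey's embedding on the ball gives $\nabla f\in H^{1,p}(B)\subset C(\overline B)\cap L^\infty(B)$, so $\langle{\rm div}A,\nabla f\rangle \in L^2(B)$ because ${\rm div}A\in L^2(B,\mathbb{R}^d)$.

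Second, I would show $\int_B\langle A\nabla f,\nabla g\rangle dx=-\int_B u g\,dx$ for $g\in C_0^\infty(B)$. Apply Definition \ref{defndivmat}(i) with $\phi_j := g\,\partial_i f$ summed appropriately, i.e. $\phi_j=\sum_i(\partial_i f)g$ in the right index pattern. The test functions $\phi_j = g\,\partial_i f$ are only in $H^{1,p}_0$ with compact support, not $C_0^\infty$. So I would first extend the identity in Definition \ref{defndivmat}(i) to compactly supported $\phi_j\in H^{1,p}(B)\cap L^\infty$ by mollifying $\phi_j$. Then $\partial_i\phi_j^{(\epsilon)}\to\partial_i\phi_j$ in $L^2$ (since $\partial_i\phi_j\in L^p\subset L^2$) and $\phi_j^{(\epsilon)}\to\phi_j$ uniformly, and the pairings with $b_{ij}\in L^\infty$ and $e_j\in L^2$ pass to the limit. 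Expanding $\partial_i(g\,\partial_k f)=\partial_i g\,\partial_kf+g\,\partial_i\partial_k f$ and rearranging indices (the trace of $A\nabla^2 f$ pairs $a_{ik}$ with $\partial_i\partial_k f$, with ${\rm div}A$ as in Definition \ref{defndivmat}) yields the desired identity. The index bookkeeping for non-symmetric $A$ must be checked against the convention $\langle A\nabla f,\nabla g\rangle=\sum a_{ij}\partial_j f\partial_i g$, choosing $\phi$ so the divergence falls on the correct index.

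Third, I would extend from $g\in C_0^\infty(B)$ to $g\in H^{1,2}_0(B)$ by density. Both sides are continuous in $g$ with respect to the $H^{1,2}$-norm: the left because $A\nabla f\in L^2$, the right because $u\in L^2$.

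I expect the main obstacle to be the justification in the second step that the weak divergence identity holds for the non-smooth test fields $g\nabla f$, together with the index bookkeeping. The key input making it work is $\nabla f\in L^\infty$ from $p>d$, which compensates for ${\rm div}A$ being only $L^2$.
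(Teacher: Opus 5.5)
Your proposal is correct and has the same structure as the paper's proof. Part (i) identifies the domain of the closure with $H^{1,2}_0(B)$, and part (ii) verifies $\mathcal{E}^0(f,\varphi)=-\int_B\big({\rm trace}(A\nabla^2 f)+\langle {\rm div}A,\nabla f\rangle\big)\varphi\,dx$ using $\nabla f\in L^\infty(B)$ from $p>d$, then concludes because the right-hand side lies in $L^2(B)$.

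The only difference is that you prove by hand what the paper imports from \cite[Theorem 3.2, Proposition 5.6]{L25bv}: the norm equivalence giving closability and $D(\mathcal{E}^0)=H^{1,2}_0(B)$, and the product rule for ${\rm div}(A\nabla f)$. For the product rule, the correct index choice in Definition \ref{defndivmat}(i) is $\phi_j=g\,\partial_j f$, followed by your mollification step. You also make explicit the density step from $C_0^\infty(B)$ to $H^{1,2}_0(B)$, which the paper leaves implicit.
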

\begin{proof}
(i) By \cite[Theorem 3.2]{L25bv}, the bilinear form $(\mathcal{E}^0, C_0^{\infty}(B))$ is closable, and its closure $(\mathcal{E}^0, D(\mathcal{E}^0))$ is a Dirichlet form. Moreover, by \cite[Theorem 3.2(ii)]{L25bv}, $D(\mathcal{E}^0) = H^{1,2}_0(B)$ follows. \\ \\
(ii) Let $f \in H^{2,p}(B) \cap H^{1,2}_0(B)$ for some $p \in (d, \infty)$. Then, by the Sobolev embedding theorem, $\nabla f \in L^{\infty}(B)$. By \cite[Proposition 5.6]{L25bv}, we have
$$
{\rm div}(A \nabla f) = {\rm trace}(A \nabla^2 f) + \langle {\rm div}A, \nabla f \rangle.
$$
Let $\varphi \in C_0^{\infty}(B)$. Then,
\begin{align*}
\mathcal{E}^0(f, \varphi) &= \int_{B} \langle A \nabla f, \nabla \varphi \rangle\,dx 
= -\int_{B} {\rm div}(A \nabla f) \cdot \varphi \,dx \\
&= -\int_{B} \Big( {\rm trace}(A \nabla^2 f) + \langle {\rm div}A, \nabla f \rangle \Big) \varphi\,dx.
\end{align*}
Since ${\rm trace}(A \nabla^2 f) + \langle {\rm div}A, \nabla f \rangle \in L^2(B)$, the function $f$ belongs to $D(L^0)$ and satisfies \eqref{claimediden}. This completes the proof.
\end{proof}

\begin{theo} \label{submaintheor}
Assume {\bf (Hypo)}. Let $x_0 \in \mathbb{R}^d$ and $R \in (0, \infty)$, and suppose that ${\rm div}A \in L^2_{loc}(B_R(x_0), \mathbb{R}^d)$.
Let $\mathbf{G} \in L^2_{loc}(B_R(x_0), \mathbb{R}^d)$, $c \in L^1_{loc}(B_R(x_0))$, $h \in L^{\infty}_{loc}(B_R(x_0))$, $\tilde{f} \in L^1_{loc}(B_R(x_0))$ and $\tilde{\mathbf{F}} \in L^2_{loc}(B_R(x_0), \mathbb{R}^d)$. Assume that
\begin{align} 
&\int_{B_R(x_0)} \left({\rm trace} (A \nabla^2 u) + \langle \mathbf{G}, \nabla u \rangle + cu \right) h\,dx \nonumber\\
& \qquad = \int_{B_R(x_0)} \tilde{f} u \,dx + \int_{B_R(x_0)} \langle \tilde{\mathbf{F}}, \nabla u \rangle\,dx \quad \text{ for all } u \in C_0^{\infty}(B_R(x_0)). \label{doublediverf}
\end{align}
Then $h \in H^{1,2}_{loc}(B_R(x_0))$.
\end{theo}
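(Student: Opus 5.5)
Fix a ball $B := B_r(y_0)$ with $\overline{B_{2r}(y_0)} \subset B_R(x_0)$. I will show $h \in H^{1,2}(B_{r/2}(y_0))$ by realizing $h$ as a weak $H^{1,2}$-limit of the functions $\alpha R_\alpha(\chi h)$, where $\chi \in C_0^\infty(B)$ is a cutoff and $R_\alpha$ is the resolvent of Theorem \ref{maintheolem} on $B$. Localization comes first. Set $g := \chi h \in L^\infty(B)$. Inserting $u = \chi\varphi$ into \eqref{doublediverf} and expanding ${\rm trace}(A\nabla^2(\chi\varphi))$ gives, for all $\varphi \in C_0^\infty(B)$,
\[
\int_B {\rm trace}(A\nabla^2 \varphi)\, g\,dx = \int_B \big(f_1 \varphi + \langle \mathbf{F}_1, \nabla\varphi\rangle\big)\,dx ,
\]
with $f_1 \in L^1(B)$ built from $\tilde f\chi$, $c\chi h$, $\langle \mathbf{G},\nabla\chi\rangle h$, $h\,{\rm trace}(A\nabla^2\chi)$, $\langle\tilde{\mathbf F},\nabla\chi\rangle$. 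The vector field $\mathbf{F}_1 \in L^2(B,\mathbb{R}^d)$ is built from $\tilde{\mathbf F}\chi$, $-\mathbf{G}\chi h$, and $-2hA\nabla\chi$. For the last term I use $\langle A\nabla\chi,\nabla\varphi\rangle$ appearing twice by symmetry of $A$, and keep it in divergence form since $h$ is not yet differentiable. Here boundedness of $h$ is essential: it is what makes $\mathbf{G}h \in L^2$. The bad $L^1$ term $f_1$ should be harmless only if it is tested against bounded functions.

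Next I test against resolvents. For $\alpha > \lambda_0$ put $u_\alpha := \alpha R_\alpha g \in H^{2,s}(B)\cap H^{1,2}_0(B)\cap C(\overline B)$ for a fixed $s>d$. By Theorem \ref{maintheolem}(ii), $\|u_\alpha\|_\infty \le \|g\|_\infty$. I would like to use $\varphi = R_\alpha w$ as test function in the identity above for $w \in L^\infty(B)$. This is justified by approximating through $C_0^\infty(B)$ in $H^{2,s}$ norm, with the $L^1$ term handled via the uniform convergence given by $s>d$. Since ${\rm trace}(A\nabla^2 R_\alpha w) = \alpha R_\alpha w - w$, I obtain a duality relation
\[
\int_B g\,(\alpha R_\alpha w - w)\,dx = \int_B f_1 R_\alpha w + \langle \mathbf F_1, \nabla R_\alpha w\rangle\,dx .
\]
To exploit this I need the adjoint $\hat R_\alpha$, i.e.\ $\int g R_\alpha w = \int w \hat R_\alpha g$. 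Instead, I prefer a direct energy estimate for $u_\alpha$. By Lemma \ref{coercievforml}(ii), $L^0 u_\alpha = \alpha(u_\alpha - g) \cdot$ plus $\langle{\rm div}A,\nabla u_\alpha\rangle$, so that
\[
\mathcal E^0(u_\alpha,u_\alpha) = -\int_B (L^A u_\alpha + \langle {\rm div}A,\nabla u_\alpha\rangle)u_\alpha = \alpha\int_B (g-u_\alpha)u_\alpha\,dx - \int_B \langle{\rm div}A,\nabla u_\alpha\rangle u_\alpha\,dx .
\]
The last term is bounded by $\|{\rm div}A\|_{L^2}\|g\|_\infty \|\nabla u_\alpha\|_{L^2}$ and is absorbed. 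The remaining task is to bound $\alpha\int (g-u_\alpha)u_\alpha = \alpha\int g\,u_\alpha - \alpha\int u_\alpha^2$ uniformly in $\alpha$.

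The uniform bound on this quantity is the main obstacle; it is where the equation for $g$ enters. I would apply the duality relation with $w = \alpha u_\alpha$ (bounded, so admissible), which produces $\alpha\int g(\alpha R_\alpha u_\alpha - u_\alpha)$. This is not obviously the desired quantity, and since $R_\alpha$ is not symmetric, relating $\int g\, R_\alpha w$ to $\int u_\alpha w$ requires care. The plan is to write $g - u_\alpha = -\tfrac1\alpha L^A u_\alpha$ and transfer the second derivatives onto $g$ using the distributional identity for $g$ with test function $\varphi = u_\alpha$. This gives
\[
\alpha\int (g-u_\alpha)u_\alpha = -\int g\,L^A u_\alpha + \int u_\alpha L^A u_\alpha = -\int f_1 u_\alpha - \int\langle\mathbf F_1,\nabla u_\alpha\rangle - \mathcal E^0(u_\alpha,u_\alpha) - \int\langle {\rm div}A,\nabla u_\alpha\rangle u_\alpha .
\]
Combining this with the energy identity, the $\mathcal E^0$ terms appear with the wrong sign to cancel. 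I would therefore instead use the symmetric energy functional $\int\langle A\nabla u_\alpha,\nabla u_\alpha\rangle$ rewritten via $\int g L^A u_\alpha$. Concretely, I would try the identity
\[
\int\langle A\nabla u_\alpha,\nabla u_\alpha\rangle = -\int u_\alpha L^A u_\alpha - \int u_\alpha\langle{\rm div}A,\nabla u_\alpha\rangle ,
\]
use $L^A u_\alpha = \alpha(u_\alpha - g)$, and use $-\alpha\int u_\alpha(u_\alpha-g) \le \alpha\int g(u_\alpha - g)\cdot$ (via $-(u-g)u \le -(u-g)g$, i.e.\ $-(u-g)^2\le 0$). This gives $\mathcal E^0(u_\alpha,u_\alpha) \le -\int g L^A u_\alpha + C\|\nabla u_\alpha\|_2$. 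The right-hand side equals $-\int f_1u_\alpha - \int\langle\mathbf F_1,\nabla u_\alpha\rangle$, which is at most $\|f_1\|_1\|g\|_\infty + \|\mathbf F_1\|_2\|\nabla u_\alpha\|_2$.

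By ellipticity this yields $\sup_\alpha \|u_\alpha\|_{H^{1,2}_0(B)} < \infty$. Finally, $u_\alpha \to g$ in $L^2(B)$, or at least weakly. This follows from the duality relation with $w \in C_0^\infty$: it gives $\int (u_\alpha - g)\psi \to 0$ along a dense class, together with boundedness in $L^\infty$. A weakly convergent subsequence in $H^{1,2}_0(B)$ therefore has limit $g$, so $\chi h \in H^{1,2}$, and $h \in H^{1,2}_{loc}$ follows by covering.
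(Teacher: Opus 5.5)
Your uniform bound on $\mathcal{E}^0(u_\alpha,u_\alpha)$ is correct and uses the same mechanism as the paper. It combines three ingredients: $\mathcal{E}^0(u,u)=-\int_B L^0u\cdot u\,dx$ from Lemma~\ref{coercievforml}(ii), the sub-Markovian bound of Theorem~\ref{maintheolem}(ii), and the pointwise inequality $-(u-g)u\le-(u-g)g$. The last one turns $-\alpha\int_B(u_\alpha-g)u_\alpha\,dx$ into $-\int_B g\,L^Au_\alpha\,dx$, so the equation can absorb the second derivatives.

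The difference is where the cutoff sits. The paper works with $\chi\,\alpha R_\alpha h$ (after first approximating $h$ by smooth $h_n$), so the product-rule terms of $L^A(\chi\,\cdot)$ must be carried through both the energy identity and the key inequality. You localize first and absorb all commutator terms into $f_1\in L^1(B)$ and $\mathbf{F}_1\in L^2(B,\mathbb{R}^d)$. The estimate for $u_\alpha=\alpha R_\alpha(\chi h)$ then takes a few lines, which is a genuine simplification.

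One justification needs repair. $u_\alpha$ (like any $R_\alpha w$) lies in $H^{2,s}(B)\cap H^{1,s}_0(B)$, but in general not in the $H^{2,s}$-closure of $C_0^\infty(B)$, since its normal derivative need not vanish on $\partial B$. Use instead that $g$, $f_1$, $\mathbf{F}_1$ vanish outside ${\rm supp}\,\chi$. Replace $u_\alpha$ by $\chi_1u_\alpha$ with $\chi_1\in C_0^\infty(B)$ and $\chi_1\equiv 1$ near ${\rm supp}\,\chi$; this changes none of the integrals, and you can then mollify.

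The step that fails as written is the identification of the limit. Taking $w=\psi$ in your duality relation controls $\int_B g\,(\alpha R_\alpha\psi-\psi)\,dx$, i.e.\ the adjoint family. Since $R_\alpha$ is not symmetric in $L^2(B,dx)$ (the very difficulty you noted earlier), this says nothing about $\int_B(\alpha R_\alpha g-g)\psi\,dx$. What works is the bound you have just proved. Since $\alpha R_\alpha g-g=L^AR_\alpha g$, Lemma~\ref{coercievforml}(ii) gives, for $\psi\in C_0^\infty(B)$,
\[
\int_B(u_\alpha-g)\psi\,dx=\frac{1}{\alpha}\int_B\psi\,L^Au_\alpha\,dx=-\frac{1}{\alpha}\int_B\Big(\langle A\nabla u_\alpha,\nabla\psi\rangle+\langle{\rm div}A,\nabla u_\alpha\rangle\psi\Big)\,dx .
\]
This is $O(\alpha^{-1})$ because $\sup_\alpha\|\nabla u_\alpha\|_{L^2(B)}<\infty$ and ${\rm div}A\in L^2(B,\mathbb{R}^d)$; it is also how the paper concludes. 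With this replacement your argument is complete.

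Two slips in the write-up should also be fixed. The abandoned identity $\alpha\int_B(g-u_\alpha)u_\alpha=-\int_B gL^Au_\alpha+\int_B u_\alpha L^Au_\alpha$ is false, since its right-hand side equals $\alpha\int_B(u_\alpha-g)^2$. The inequality you actually use should read $-\alpha\int_B u_\alpha(u_\alpha-g)\le-\alpha\int_B g(u_\alpha-g)$.
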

\begin{proof}
Let $s \in (0, R)$ and choose $r \in (s, R)$. Set $B := B_r(x_0)$.
Take $\chi \in C_0^{\infty}(B)$ such that $\chi \equiv 1$ on $B_s(x_0)$ and $0 \leq \chi \leq 1$ on $\mathbb{R}^d$.
To prove the theorem, it suffices to show that $\chi h \in H^{1,2}_0(B)$.
Define the partial differential operator $L^A$ as in \eqref{nondivpardif}.
Let $p \in (d, \infty)$ and $f \in L^{\infty}(B)$. Then, by Theorem \ref{maintheolem}, there exists a constant $\lambda_0 \in (0, \infty)$ depending only on 
$d, \lambda, M, B, p$ and $\omega$ such that if $\alpha \geq  \lambda_0$, then there exists a unique function $R_{\alpha} f \in H^{2,p}(B) \cap H^{1,2}_0(B) \cap C(\overline{B})$ satisfying
\begin{equation} \label{operatorequ}
(\alpha - L^A) R_{\alpha} f = f \quad \text{in } B,
\end{equation}
\begin{equation} \label{submarkoestim}
\| \alpha R_{\alpha} f \|_{L^{\infty}(B)} \leq \| f \|_{L^{\infty}(B)},
\end{equation}
and
\begin{equation} \label{h2pestimate}
\| R_{\alpha} f \|_{H^{2,p}(B)} \leq C \| f \|_{L^p(B)},
\end{equation}
where $C > 0$ depends only on $d, \lambda, M, B, p, \omega$ and $\alpha$.
Using mollification, let us approximate the function $h \in L^{\infty}(B)$ by a sequence $(h_n)_{n \geq 1} \subset C_0^{\infty}(B_R(x_0))$ such that
\[
\lim_{n \to \infty} h_n = h \quad \text{a.e. in } B,
\]
and
\begin{equation} \label{fnestimbyh}
\| h_n \|_{L^\infty(B_R(x_0))} \leq \| h \|_{L^\infty(B)} \quad \text{for all } n \geq 1.
\end{equation}
Now fix $\alpha > \lambda_0$.
By applying \eqref{operatorequ} and \eqref{h2pestimate}, along with the Sobolev embedding and the Lebesgue dominated convergence theorem, we deduce that
\begin{equation} \label{ralphfnconv}
R_\alpha h_n \in H^{2,p}(B) \cap H^{1,2}_0(B) \cap C(\overline{B}) \quad \text{for all } n \geq 1, \quad \lim_{n \to \infty} R_\alpha h_n = R_\alpha h \quad \text{ in } H^{2,p}(B)  \text{ and } C(\overline{B}).
\end{equation}
Moreover, from \eqref{submarkoestim} and \eqref{fnestimbyh}, we obtain
\begin{equation} \label{submarestifn}
\| \alpha R_{\alpha} h_n \|_{L^{\infty}(B)} \leq  \|h_n \|_{L^{\infty}(B)} \leq \| h \|_{L^{\infty}(B)}.
\end{equation}
By the product rule and using \eqref{operatorequ}, we have
\begin{align}
L^A(\chi R_\alpha h_n) 
&= R_{\alpha} h_n \cdot L^A \chi + \chi \cdot L^A R_\alpha h_n + \langle 2A \nabla \chi, \nabla R_\alpha h_n \rangle \nonumber \\
&= R_{\alpha} h_n \cdot L^A \chi + \chi (\alpha R_\alpha h_n - h_n) + \langle 2A \nabla \chi, \nabla R_\alpha h_n \rangle. \label{producrule}
\end{align}
Now define the coercive form $(\mathcal{E}^0, D(\mathcal{E}^0))$ as the closure of \eqref{closablema}.
By Lemma \ref{coercievforml}(i) and the ellipticity of $A$, the norm $\| \cdot \|_{D(\mathcal{E}^0)}$ is equivalent to $\| \cdot \|_{H^{1,2}_0(B)}$, and we have $D(\mathcal{E}^0) = H^{1,2}_0(B)$.
Let $(L^0, D(L^0))$ be the generator associated with $(\mathcal{E}^0, D(\mathcal{E}^0))$. Then, by Lemma \ref{coercievforml}(ii), we have
\[
H^{2,p}(B) \cap H^{1,2}_0(B) \subset D(L^0),
\]
and 
\begin{equation} \label{realizatio}
L^0 f = L^A f + \langle {\rm div}A, \nabla f \rangle \quad \text{ for all $f \in H^{2,p}(B) \cap H^{1,2}_0(B)$. }
\end{equation}
Since $\chi R_{\alpha} h_n \in H^{2,p}(B) \cap H^{1,2}_0(B)$, it follows that $\chi R_\alpha h_n \in D(L^0) \subset D(\mathcal{E}^0)$.
Hence, by integration by parts, together with \eqref{realizatio}, \eqref{producrule}, \eqref{fnestimbyh}, and \eqref{submarestifn}, we compute
\begin{align}
&\mathcal{E}^0(\chi \alpha R_\alpha h_n, \chi \alpha R_\alpha h_n)
= -\int_{B} L^0(\chi \alpha R_\alpha h_n) \cdot (\chi \alpha R_\alpha h_n)\, dx \nonumber \\
&= -\int_{B} L^{A}(\chi \alpha R_\alpha h_n) \cdot (\chi \alpha R_\alpha h_n)\, dx
    - \int_{B} \langle {\rm div}A, \nabla(\chi \alpha R_\alpha h_n) \rangle (\chi \alpha R_\alpha h_n)\, dx \nonumber \\
&= -\int_{B} \chi (L^A \chi) \cdot (\alpha R_\alpha h_n)^2\, dx 
    - \alpha \int_{B} (\alpha R_\alpha h_n - h_n) \chi^2 \alpha R_\alpha h_n\, dx \nonumber \\
&\quad \quad - \int_{B} \langle 2A \nabla \chi, \nabla(\chi \alpha R_\alpha h_n) \rangle \alpha R_\alpha h_n\, dx 
    + \int_{B} \langle 2A \nabla \chi, \nabla \chi \rangle (\alpha R_\alpha h_n)^2\, dx  \nonumber \\
&\quad \quad - \int_{B} \langle {\rm div}A, \nabla(\chi \alpha R_\alpha h_n) \rangle (\chi \alpha R_\alpha h_n)\, dx \label{energybound} \\
&\leq \|\chi\|_{L^1(B)} \|L^A \chi\|_{L^\infty(B)} \|h\|_{L^\infty(B)}^2 
    + 2\alpha \|h\|_{L^\infty(B)}^2 \|\chi^2\|_{L^1(B)} \nonumber \\
&\quad \quad + 2\|h\|_{L^\infty(B)} \cdot \mathcal{E}^0(\chi, \chi)^{1/2} \mathcal{E}^0(\chi \alpha R_\alpha h_n, \chi \alpha R_\alpha h_n)^{1/2} 
    + 2\|h\|_{L^\infty(B)}^2 \mathcal{E}^0(\chi, \chi) \nonumber \\
&\quad \quad + \|{\rm div}A\|_{L^2(B)} \|h\|_{L^\infty(B)} \lambda^{-1/2} \mathcal{E}^0(\chi \alpha R_\alpha h_n, \chi \alpha R_\alpha h_n)^{1/2} \nonumber \\
&\leq C_1 + C_2 \cdot \mathcal{E}^0(\chi \alpha R_\alpha h_n, \chi \alpha R_\alpha h_n)^{1/2}, \nonumber
\end{align}
where
\begin{align*}
C_1 &= \|\chi\|_{L^1(B)} \|L^A \chi\|_{L^\infty(B)} \|h\|_{L^\infty(B)}^2 
+ 2\alpha \|h\|_{L^\infty(B)}^2 \|\chi^2\|_{L^1(B)} 
+ 2\|h\|_{L^\infty(B)}^2 \mathcal{E}^0(\chi, \chi), \\
C_2 &= 2\|h\|_{L^\infty(B)} \cdot \mathcal{E}^0(\chi, \chi)^{1/2} 
+ \|{\rm div}A\|_{L^2(B)} \|h\|_{L^\infty(B)} \cdot \lambda^{-1/2}.
\end{align*}
Thus, by Young's inequality, we obtain
\[
\mathcal{E}^0 (\chi \alpha R_{\alpha}h_n, \chi \alpha R_{\alpha}h_n) \leq 2C_1 + C_2^2.
\]
Therefore, by the Banach-Alaoglu theorem and \eqref{ralphfnconv}, we conclude that $\chi \alpha R_{\alpha} h \in D(\mathcal{E}^0)$ and
\begin{equation} \label{weaklyconh}
\lim_{n \rightarrow \infty} \chi \alpha R_{\alpha} h_n = \chi \alpha R_{\alpha} h, \quad \text{ weakly in } D(\mathcal{E}^0).
\end{equation}
Meanwhile, let $g \in H^{2,p}(B)$ with ${\rm supp}(g) \subset B$. Then, by mollification, there exists a sequence $g_n \in C_0^{\infty}(B)$ such that $g_n \to g$ in $H^{2,p}(B)$. Hence, \eqref{doublediverf} implies that
\begin{equation} \label{doublediwstr}
\int_{B} \left( L^A g + \langle \mathbf{G}, \nabla g \rangle + c g \right) h\, dx 
= \int_{B} \tilde{f} g\, dx + \int_{B} \langle \tilde{\mathbf{F}}, \nabla g \rangle\, dx
\end{equation}
for all $g \in H^{2,p}(B)$ with ${\rm supp}(g) \subset B$. We begin by noting the trivial inequality
\[
-\alpha \int_{B} (\alpha R_{\alpha} h - h)^2 \chi^2 dx \leq 0,
\]
and hence
\begin{align} \small
& - \alpha \int_{B} (\alpha R_{\alpha} h - h) \cdot \alpha R_{\alpha} h \cdot \chi^2 dx 
\leq - \alpha \int_{B} (\alpha R_{\alpha} h - h) h \chi^2 dx \nonumber \\
&= \lim_{n \rightarrow \infty} - \alpha \int_{B} (\alpha R_{\alpha} h_n - h_n) h \chi^2 dx \underset{\text{by \eqref{operatorequ}}}{=} \lim_{n \rightarrow \infty} - \int_{B} L^A (\alpha R_{\alpha} h_n) h \chi^2 dx \nonumber \\
&= \lim_{n \rightarrow \infty} \bigg( - \int_{B} L^A(\chi^2 \alpha R_{\alpha} h_n) h dx 
+ \int_{B} \langle 2A\nabla(\chi^2), \nabla \alpha R_{\alpha} h_n \rangle h dx 
+ \int_{B} L^A(\chi^2) \cdot \alpha R_{\alpha} h_n \cdot h\, dx \bigg) \nonumber \\
&\underset{\text{by \eqref{doublediwstr}}}{=\quad} \lim_{n \rightarrow \infty} \Big(
\int_{B} c\, \chi^2 \alpha R_{\alpha} h_n \cdot h\, dx 
+ \int_{B} \langle \mathbf{G}, \nabla (\chi^2 \alpha R_{\alpha} h_n) \rangle h\, dx 
- \int_{B} \tilde{f} \chi^2 \alpha R_{\alpha} h_n\, dx \nonumber \\
&\quad - \int_{B} \langle \tilde{\mathbf{F}}, \nabla (\chi^2 \alpha R_{\alpha} h_n) \rangle dx 
+ 2 \int_{B} \langle 2A \nabla \chi, \nabla (\alpha R_{\alpha} h_n) \rangle \chi h\, dx 
+ \int_{B} L^A(\chi^2) \alpha R_{\alpha} h_n \cdot h\, dx \Big) \nonumber \\
&= \lim_{n \rightarrow \infty} \bigg(
\int_{B} c\, \chi^2 \alpha R_{\alpha} h_n \cdot h\, dx 
+ \int_{B} \langle \mathbf{G}, \nabla (\chi \alpha R_{\alpha} h_n) \rangle \chi h\, dx 
- \int_{B} \tilde{f} \chi^2 \alpha R_{\alpha} h_n\, dx \nonumber \\
&\quad - \int_{B} \langle \chi \tilde{\mathbf{F}}, \nabla (\chi \alpha R_{\alpha} h_n) \rangle dx 
+ 2 \int_{B} \langle 2A\nabla \chi, \nabla (\chi \alpha R_{\alpha} h_n) \rangle h\, dx 
+ \int_{B} L^A(\chi^2) \alpha R_{\alpha} h_n \cdot h\, dx \nonumber \\
&\quad - \int_{B} \langle \tilde{\mathbf{F}}, \nabla \chi \rangle \chi \alpha R_{\alpha} h_n\, dx 
+ \int_{B} \langle \mathbf{G}, \nabla \chi \rangle \chi \alpha R_{\alpha} h_n \cdot h\, dx 
- 2 \int_{B} \langle 2A \nabla \chi, \nabla \chi \rangle \alpha R_{\alpha} h_n \cdot h\, dx \bigg) \nonumber \\
&\underset{\text{by \eqref{ralphfnconv}, \eqref{weaklyconh}}}{=} \int_{B} c\, \chi^2 \alpha R_{\alpha} h \cdot h\, dx 
+ \int_{B} \langle \mathbf{G}, \nabla (\chi \alpha R_{\alpha} h) \rangle \chi h\, dx 
- \int_{B} \tilde{f} \chi^2 \alpha R_{\alpha} h\, dx \nonumber \\
&\quad - \int_{B} \langle \chi \tilde{\mathbf{F}}, \nabla (\chi \alpha R_{\alpha} h) \rangle dx 
+ 2 \int_{B} \langle 2A \nabla \chi, \nabla (\chi \alpha R_{\alpha} h) \rangle h\, dx 
+ \int_{B} L^A(\chi^2) \alpha R_{\alpha} h \cdot h\, dx \nonumber \\
&\quad - \int_{B} \langle \tilde{\mathbf{F}}, \nabla \chi \rangle \chi \alpha R_{\alpha} h\, dx 
+ \int_{B} \langle \mathbf{G}, \nabla \chi \rangle \chi \alpha R_{\alpha} h \cdot h\, dx 
- 4 \int_{B} \langle A \nabla \chi, \nabla \chi \rangle \alpha R_{\alpha} h \cdot h\, dx \nonumber \\
&\leq \|\chi^2 c\|_{L^1(B)} \|h\|^2_{L^{\infty}(B)} 
+ \|\mathbf{G} \|_{L^2(B)} \|\chi h \|_{L^{\infty}(B)} \lambda^{-1/2} \mathcal{E}^0(\chi \alpha R_{\alpha} h, \chi \alpha R_{\alpha} h )^{1/2} \nonumber \\
&\quad + \|\chi^2 \tilde{f}\|_{L^1(B)} \|h\|_{L^{\infty}(B)}     
+ \Big( \lambda^{-1/2}\|\chi \tilde{\mathbf{F}} \|_{L^2(B)}  + 4 \mathcal{E}^0 (\chi, \chi)^{1/2} \|h\|_{L^{\infty}(B)} \Big) \mathcal{E}^0 (\chi \alpha R_{\alpha} h, \chi \alpha R_{\alpha} h  )^{1/2} \nonumber \\
&\quad + \|L^A (\chi^2) \|_{L^1(B)} \|h \|^2_{L^{\infty}(B)} 
+ \|\tilde{\mathbf{F}} \|_{L^1(B)} \| \nabla \chi \|_{L^{\infty}(B)} \| h \|_{L^{\infty}(B)} \nonumber \\
&\quad + \| \mathbf{G} \|_{L^1(B)} \|\nabla \chi \|_{L^{\infty}(B)} \| h \|_{L^{\infty}(B)} 
+ 4 \mathcal{E}^0 (\chi, \chi) \| h\|^2_{L^{\infty}(B)} \nonumber \\
&= C_3 \cdot \mathcal{E}^0 (\chi \alpha R_{\alpha} h, \chi \alpha R_{\alpha} h)^{1/2} + C_4, \label{criticineq}
\end{align}
where
\[
C_3 := \|\mathbf{G} \|_{L^2(B)} \|\chi h \|_{L^{\infty}(B)} \lambda^{-1/2} 
+ \lambda^{-1/2} \|\chi \tilde{\mathbf{F}} \|_{L^2(B)} 
+ 4 \mathcal{E}^0 (\chi, \chi)^{1/2} \|h\|_{L^{\infty}(B)},
\]
and
\begin{align*}
C_4 :=\; & \|\chi^2 c\|_{L^1(B)} \|h\|^2_{L^{\infty}(B)} 
+ \|\chi^2 \tilde{f}\|_{L^1(B)} \|h\|_{L^{\infty}(B)}
+ \|L^{A} (\chi^2)\|_{L^1(B)} \|h\|^2_{L^{\infty}(B)} \\
&+ \|\tilde{\mathbf{F}} \|_{L^1(B)} \|\nabla \chi \|_{L^{\infty}(B)} \|h\|_{L^{\infty}(B)} 
+ \| \mathbf{G} \|_{L^1(B)} \|\nabla \chi \|_{L^{\infty}(B)} \|h\|_{L^{\infty}(B)} 
+ 4 \mathcal{E}^0 (\chi, \chi) \|h\|^2_{L^{\infty}(B)}.
\end{align*}
Using \eqref{energybound}, \eqref{ralphfnconv}, \eqref{weaklyconh} and \eqref{criticineq}, it follows that
\begin{align*}
&\mathcal{E}^0(\chi \alpha R_{\alpha} h, \chi \alpha R_{\alpha} h ) 
\leq \liminf_{n \rightarrow \infty} \mathcal{E}^0(\chi \alpha R_{\alpha} h_n, \chi \alpha R_{\alpha} h_n ) \\
&\underset{\text{ by \eqref{energybound}}}{=} - \int_{B} \chi (L^{A} \chi)  \cdot(\alpha R_{\alpha} h)^2 dx 
- \alpha \int_{B} (\alpha R_{\alpha} h - h) \chi^2 \alpha R_{\alpha} h \, dx \\
&\quad - \int_{B} \langle 2A \nabla \chi, \nabla (\chi \alpha R_{\alpha} h) \rangle \chi \alpha R_{\alpha} h dx 
+ \int_{B} \langle 2A \nabla \chi, \nabla \chi \rangle (\alpha R_{\alpha} h)^2 dx \\
&\quad - \int_{B} \langle {\rm div}A, \nabla (\chi \alpha R_{\alpha} h) \rangle (\chi \alpha R_{\alpha} h) dx \\
&\underset{\text{by \eqref{criticineq}}}{\leq} \| \chi\|_{L^1(B)}  \| L^A \chi \|_{L^{\infty}(B)} \|h\|^2_{L^{\infty}(B)}+ C_3 \mathcal{E}^0 (\chi \alpha R_{\alpha} h, \chi \alpha R_{\alpha} h)^{1/2} + C_4 \\
&\quad + 2\| h \|_{L^{\infty}(B)} \mathcal{E}^0 (\chi, \chi)^{1/2} \mathcal{E}^0 (\chi \alpha R_{\alpha} h, \chi \alpha R_{\alpha} h)^{1/2} \\
&\quad + 2\| h \|^2_{L^{\infty}(B)} \mathcal{E}^0(\chi, \chi) 
+ \| {\rm div}A\|_{L^2(B)} \| h \|_{L^{\infty}(B)} \lambda^{-1/2} \mathcal{E}^0(\chi \alpha R_{\alpha} h, \chi \alpha R_{\alpha} h)^{1/2} \\
&\le C_5 \mathcal{E}^0 (\chi \alpha R_{\alpha}h, \chi \alpha R_{\alpha}h)^{1/2} + C_6,
\end{align*}
where 
$$
C_5 = C_3+2\| h \|_{L^{\infty}(B)} \mathcal{E}^0 (\chi, \chi)^{1/2}+\| {\rm div}A\|_{L^2(B)} \| h \|_{L^{\infty}(B)} \lambda^{-1/2}
$$ 
and 
$$
C_6 = \| \chi\|_{L^1(B)}  \| L^A \chi \|_{L^{\infty}(B)} \|h\|^2_{L^{\infty}(B)} + C_4 + 2\| h \|^2_{L^{\infty}(B)} \mathcal{E}^0(\chi, \chi).
$$
Thus, using Young's inequality,
\begin{equation} \label{uniformhestimr}
\mathcal{E}^0 (\chi \alpha R_{\alpha} h, \chi \alpha R_{\alpha} h) 
\leq C_5^2 + 2C_6 =: C_7,
\end{equation}
where $C_7$ is the constant independent of $\alpha$. Hence, by the Banach–Alaoglu theorem, there exist $\hat{h} \in D(\mathcal{E}^0)$ and an increasing sequence $(\alpha_k)_{k \geq 1} \subset [\lambda_0, \infty)$ with $\alpha_k \nearrow \infty$ as $k \to \infty$ such that
\begin{equation} \label{alphainfin}
\lim_{k \to \infty} \chi \alpha_k R_{\alpha_k} h = \hat{h} \quad \text{weakly in } D(\mathcal{E}^0).
\end{equation}
Let $L^{0} g ={\rm trace}(A \nabla^2 g) + \langle {\rm div}A, \nabla g \rangle$, $g \in C_0^{\infty}(B)$. Then, using Lemma \ref{coercievforml}(ii),
\begin{align} 
&-\int_{B}  \Big({\rm trace}(A \nabla^2 f) + \langle {\rm div}A, \nabla f \rangle \Big) \cdot g\,dx = \int_{B} \langle A \nabla f, \nabla g \rangle \,dx = \int_{B} \langle A \nabla g, \nabla f \rangle \,dx  \nonumber \\
&\quad= - \int_{B} L^{0}g \cdot f \,dx \label{intbypartn} \quad \text{ for any $f \in H^{2,p}(B) \cap H^{1,2}_0(B)$ and $g \in C_0^{\infty}(B)$}.
\end{align}
Now let $u \in C_0^{\infty}(B)$ be arbitrary. Then, using \eqref{ralphfnconv}, \eqref{weaklyconh}, \eqref{operatorequ} and \eqref{submarkoestim}, we have
\begin{align*}
&\left| \int_{B} (\hat{h} - \chi h) u \, dx \right| 
\underset{\text{by \eqref{alphainfin}}}{=} \lim_{k \to \infty} \left| \int_{B} \chi (\alpha_k R_{\alpha_k} h - h) u \, dx \right| \\
&\underset{\text{by \eqref{ralphfnconv}}}{=} \lim_{k \to \infty} \lim_{n \to \infty} \left| \int_{B} \chi (\alpha_k R_{\alpha_k} h_n - h_n) u \, dx \right| 
\underset{\text{by \eqref{operatorequ}}}{=} \lim_{k \to \infty} \lim_{n \to \infty} \left| \int_{B} \chi u (L^A R_{\alpha_k} h_n)  \, dx \right| \\
&\underset{\text{by \eqref{realizatio}}} {=}\lim_{k \to \infty} \lim_{n \to \infty} \left| \int_{B} \chi u \Big(
{\rm trace}(A \nabla^2 R_{\alpha_k} h_n) + \langle {\rm div}A, \nabla R_{\alpha_k} h_n \rangle \Big)  \, dx - \int_{B} \langle {\rm div}A, \chi \nabla R_{\alpha_k} h_n 	\rangle u\,dx  \right| 			\\
&\underset{\text{by \eqref{intbypartn}}} {=}\lim_{k \to \infty} \lim_{n \to \infty} \left| \int_{B} R_{\alpha_k} h_n   L^{0} (\chi u) - \int_{B}\langle {\rm div} A, \nabla (\chi R_{\alpha_k} h_n) \rangle u\,dx + \int_{B}\langle {\rm div} A, u\nabla \chi \rangle R_{\alpha_k} h_n   \, dx \right| \\
&\underset{\text{by \eqref{ralphfnconv}}}{=} \lim_{k \to \infty} \left| \int_{B} L^{0}(\chi u) \cdot R_{\alpha_k} h \, dx 
- \int_{B} \langle {\rm div} A, \nabla (\chi R_{\alpha_k} h) \rangle u \, dx 
+ \int_{B} \langle {\rm div} A, \nabla \chi \rangle R_{\alpha_k} h \cdot u \, dx \right| \\
&\underset{\text{by }\eqref{submarkoestim}}{\leq} \lim_{k \to \infty} 
\frac{1}{\alpha_k} \| L^{0}(\chi u) \|_{L^1(B)} \| h \|_{L^{\infty}(B)} 
+ \frac{1}{\alpha_k}\| {\rm div} A \|_{L^2(B)} \lambda^{-1/2} \mathcal{E}^0(\chi \alpha_k R_{\alpha_k} h, \chi \alpha_k R_{\alpha_k} h)^{1/2}  \|u\|_{L^{\infty}(B)} \\
&\qquad  +\frac{1}{\alpha_k} \| {\rm div} A \|_{L^1(B)} \| \chi \|_{L^{\infty}(B)} \| h \|_{L^{\infty}(B)} \|u\|_{L^{\infty}(B)}  \\
&\underset{\text{by \eqref{uniformhestimr}}}{\leq} \lim_{k \to \infty} \frac{1}{\alpha_k} 
\Big( \| L^{0}(\chi u) \|_{L^1(B)} \| h \|_{L^{\infty}(B)} 
+ \| {\rm div} A \|_{L^2(B)} \lambda^{-1/2} C_7 \|u\|_{L^{\infty}(B)}
\\
&\qquad \qquad + \| {\rm div} A \|_{L^1(B)} \| \chi \|_{L^{\infty}(B)} \| h \|_{L^{\infty}(B)} \|u\|_{L^{\infty}(B)}\Big) = 0.
\end{align*}
Therefore, we conclude that $\chi h = \hat{h} \in D(\mathcal{E}^0)=H^{1,2}_0(B)$, and hence $\chi h \in H^{1,2}_0(B)$, as desired.
\end{proof}
\centerline{}
\centerline{}
Now, we are ready to present the proof of the main result, Theorem \ref{maintheore} in introduction. \\ \\
{\bf Proof of Theorem \ref{maintheore}} \;
Fix an arbitrary point $x_0 \in U$. Since $U$ is open, there exists $R_0 > 0$ such that  $\overline{B}_{R_0}(x_0) \subset U$. Let $R \in (0, R_0)$ be arbitrary. To prove the desired regularity of $h$, it suffices to show that $h \in H^{1,2}_{loc}(B_R(x_0))$. Choose a smooth cutoff function $\chi \in C^\infty_0(B_{R_0}(x_0))$ satisfying $\chi \equiv 1$ on $B_R(x_0)$ and $0 \leq \chi \leq 1$ on $\mathbb{R}^d$. Since each coefficient $a_{ij}$ belongs to $VMO(B_{R_0}(x_0))$ for $1 \leq i,j \leq d$, we apply \cite[Lemma 5.2]{L25bv} to obtain functions $\hat{a}_{ij} \in VMO_{\omega_1} \cap L^\infty(\mathbb{R}^d)$ for some modulus $\omega_1$ such that $\hat{a}_{ij} = a_{ij}$ on $B_{R_0}(x_0)$. Let us denote $\hat{A} := (\hat{a}_{ij})_{1 \leq i,j \leq d}$. We now define a modified matrix of functions $\overline{A} := (\overline{a}_{ij})_{1 \leq i,j \leq d}$ via
\[
\overline{A}(x) := \chi(x)\big( \hat{A}(x) - \lambda_{B_{R_0}(x_0)} \, id \big) + \lambda_{B_{R_0}(x_0)} \, id, \quad x \in \mathbb{R}^d,
\]
where $id$ is the $d \times d$ identity matrix. Note that $\chi \in VMO_{\omega_2} \cap L^\infty(\mathbb{R}^d)$ for some modulus $\omega_2$, so by setting $\omega := \omega_1 + \omega_2$, it follows from \cite[Proposition 5.1]{L25bv} that for each $1 \leq i,j \leq d$ $\overline{a}_{ij} \in VMO_{\omega} \cap L^\infty(\mathbb{R}^d)$. Observe that for each $x \in B_R(x_0)$, we have
\[
\overline{a}_{ij}(x) = \hat{a}_{ij}(x) = a_{ij}(x), \quad \text{for all } 1 \leq i,j \leq d.
\]
Furthermore, the matrix $\overline{A}(x)$ satisfies the following:  for a.e. $x \in \mathbb{R}^d$ and all $\xi \in \mathbb{R}^d$,
\[
\lambda_{B_{R_0}(x_0)} |\xi|^2 \leq \langle \overline{A}(x) \xi, \xi \rangle  \quad \text{and} \quad \max_{i,j} |\overline{a}_{ij}(x)| \leq M_{B_{R_0}(x_0)} + 2 \lambda_{B_{R_0}(x_0)}. %\quad \text{ for a.e. $x \in \mathbb{R}^d$ and all $\xi \in \mathbb{R}^d$. }
\]
We observe that $\operatorname{div} \overline{A} = \operatorname{div} A \in L^2(B_R(x_0), \mathbb{R}^d)$. Also, $h$ satisfies the following identity for every test function $\varphi \in C_0^\infty(B_R(x_0))$:
\begin{equation} \label{eq:localform}
\int_{B_R(x_0)} \left( \operatorname{trace}(\overline{A} \nabla^2 \varphi) + \langle \mathbf{G}, \nabla \varphi \rangle + c \varphi \right) h \, dx = \int_{B_R(x_0)} \tilde{f} \varphi \, dx + \int_{B_R(x_0)} \langle \tilde{\mathbf{F}}, \nabla \varphi \rangle \, dx.
\end{equation}
The matrix  $\overline{A}$ satisfies the structural assumptions in \textbf{(Hypo)}. Therefore, we apply Theorem~\ref{submaintheor} to \eqref{eq:localform} and conclude that $h \in H^{1,2}_{loc}(B_R(x_0))$, as desired. \\
\text{}\hfill $\square$

\section{Application to the uniqueness results} \label{sec4}
The local regularity result established in the main theorem enables us to convert solutions of the stationary Fokker–Planck equations into weak solutions of a divergence form equation. Based on this, our main theorem can be applied to derive a uniqueness result for solutions to the stationary Fokker–Planck equations.
\begin{theo} \label{applicationth}
Let $A = (a_{ij})_{1 \leq i,j \leq d}$ be a symmetric matrix of functions whose components satisfy $a_{ij} \in VMO_{loc}(\mathbb{R}^d)$ for all $1 \leq i,j \leq d$. Assume that the condition \eqref{ellipticit} holds for some strictly positive constants $\lambda$ and $M$. Let $C = (c_{ij})_{1 \leq i,j \leq d}$ be an anti-symmetric matrix of functions satisfying $c_{ij} = -c_{ji} \in L^{\infty}(\mathbb{R}^d)$ for all $1 \leq i,j \leq d$. Assume that ${\rm div}(A+C) \in L^2_{loc}(\mathbb{R}^d, \mathbb{R}^d)$. Let $\rho \in H^{1,2}_{loc}(\mathbb{R}^d) \cap L^{\infty}(\mathbb{R}^d)$ be such that for some constant $\theta>0$, $\rho(x) > \theta$ for a.e. $x \in \mathbb{R}^d$. Then, the following statements hold:
\begin{itemize}
\item[(i)]
\begin{align*}
%&\int_{\mathbb{R}^d} \frac{1}{\rho} {\rm div} \Big( \rho(A+C) \nabla f \Big)\, \rho \, dx \\
 \int_{\mathbb{R}^d} \bigg( {\rm trace}(A \nabla^2 f) + \left\langle {\rm div}(A+C) + \frac{1}{\rho}(A+C^T) \nabla \rho, \nabla f \right\rangle \bigg)\, \rho\, dx=0 \quad \text{ for all $f \in C_0^{\infty}(\mathbb{R}^d)$.}
\end{align*}
\item[(ii)]
If $h \in L^{\infty}_{loc}(\mathbb{R}^d)$ with $h \geq 0$ in $\mathbb{R}^d$ satisfies
\[
\int_{\mathbb{R}^d} \bigg( {\rm trace}(A \nabla^2 f) + \left\langle {\rm div}(A+C) + \frac{1}{\rho}(A+C^T) \nabla \rho, \nabla f \right\rangle \bigg)\, h\, dx = 0 \quad \text{for all } f \in C_0^{\infty}(\mathbb{R}^d),
\]
then there exists a constant $c \geq 0$ such that $h = c \rho$ in $\mathbb{R}^d$.
\item[(iii)]
If $h \in L^{\infty}(\mathbb{R}^d)$ satisfies
\[
\int_{\mathbb{R}^d} \bigg( {\rm trace}(A \nabla^2 f) + \left\langle {\rm div}(A+C) + \frac{1}{\rho}(A+C^T) \nabla \rho, \nabla f \right\rangle \bigg)\, h\, dx = 0 \quad \text{for all } f \in C_0^{\infty}(\mathbb{R}^d),
\]
then there exists a constant $c \in \mathbb{R}$ such that $h = c \rho$ in $\mathbb{R}^d$.
\end{itemize}
\end{theo}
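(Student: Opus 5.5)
For (i), I will rewrite the operator in divergence form. With $\mathbf{B} := {\rm div}(A+C)$, the plan is to show that for $f \in C_0^\infty(\mathbb{R}^d)$,
\[
\Big({\rm trace}(A\nabla^2 f) + \langle \mathbf{B} + \tfrac{1}{\rho}(A+C^T)\nabla\rho, \nabla f\rangle\Big)\rho = {\rm div}\big(\rho(A+C)\nabla f\big)
\]
in the weak sense. The ingredients are the following.
\begin{itemize}
\item The identity ${\rm div}((A+C)\nabla f) = {\rm trace}(A\nabla^2 f) + \langle {\rm div}(A+C), \nabla f\rangle$ holds in the distributional sense. Here ${\rm trace}(C\nabla^2 f)=0$ by anti-symmetry, and I would argue as in \cite[Proposition 5.6]{L25bv}, via mollification.
\item The product rule holds for $\rho \in H^{1,2}_{loc}\cap L^\infty$ against the $L^\infty$ field $(A+C)\nabla f$, giving $\langle (A+C)\nabla f, \nabla \rho\rangle = \langle (A+C^T)\nabla\rho, \nabla f\rangle$.
\end{itemize}
Testing against $1$, which is legitimate since $\rho(A+C)\nabla f$ has compact support, yields $\int {\rm div}(\rho(A+C)\nabla f)\,dx = 0$, which is (i). To justify the product rule, I will test ${\rm div}((A+C)\nabla f)\in L^2_{loc}$ against $\rho\varphi$ with $\varphi\in C_0^\infty$ equal to $1$ on ${\rm supp} f$. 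This uses density of smooth functions in $H^{1,2}$ together with the $L^\infty$ bound on $(A+C)\nabla f$.

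For (ii) and (iii), the first step is regularity. The drift $\mathbf{G} := \mathbf{B} + \rho^{-1}(A+C^T)\nabla\rho$ lies in $L^2_{loc}$, because $\rho^{-1}\le\theta^{-1}$, $A,C\in L^\infty$ and $\nabla\rho\in L^2_{loc}$. Theorem \ref{maintheore} therefore applies with $c=0$, $\tilde f=0$, $\tilde{\mathbf F}=0$ and gives $h\in H^{1,2}_{loc}(\mathbb{R}^d)$. I then set $u := h/\rho \in H^{1,2}_{loc}\cap L^\infty_{loc}$, using $\rho\ge\theta$ and $\rho\in L^\infty$. Integrating by parts as in (i), the equation for $h$ becomes, for all $f\in C_0^\infty$,
\[
\int \langle \rho (A+C^T)\nabla u, \nabla f\rangle\,dx = 0 .
\]
Heuristically, $\int L f\, u\rho\,dx = -\int \langle \rho(A+C)\nabla f,\nabla u\rangle\,dx$. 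I expect the careful derivation to need $\int L f\, h\,dx = \int {\rm div}(\rho(A+C)\nabla f)\,u\,dx$ and then one integration by parts using $u\in H^{1,2}_{loc}$. So $u$ is a weak solution of a divergence-form equation ${\rm div}(\rho(A+C)^T\nabla u)=0$ with bounded, uniformly elliptic coefficients; the symmetric part of $\rho(A+C)^T$ is $\rho A\ge\theta\lambda\, id$.

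The remaining step is a Liouville argument. In (iii), $u=h/\rho$ is bounded on $\mathbb{R}^d$. By De Giorgi--Nash--Moser, bounded weak solutions of uniformly elliptic divergence-form equations with bounded measurable coefficients (non-symmetric is fine since $C$ is bounded) satisfy the interior H\"older estimate $\mathrm{osc}_{B_r} u\le C(r/R)^\gamma\,\mathrm{osc}_{B_R}u$. Letting $R\to\infty$ shows that $u$ is constant, so $h=c\rho$. In (ii), $u\ge0$ but need not be bounded globally. Here I would use the Harnack inequality instead: $\mathrm{osc}_{B_R}(u-\inf u)$ is controlled by scaling. Concretely, I apply Harnack to $u-m$ with $m=\inf_{\mathbb{R}^d}u\ge0$. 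Since $\inf_{B_R}(u-m)\to0$, we get $\sup_{B_{R}}(u-m)\le C\inf_{B_{R}}(u-m)\to0$ uniformly for fixed small balls, hence $u\equiv m$.

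I expect the main obstacle to be the rigorous justification of the weak reformulation, that is, moving derivatives from $f$ onto $u$. It needs care because ${\rm div}(A+C)$ is only in $L^2_{loc}$ and the products involve $H^{1,2}_{loc}$ functions. I would handle it by mollifying $\rho$ and $u$, noting that the required pairings ($L^2\cdot L^2$ and $L^\infty\cdot L^1$) pass to the limit.
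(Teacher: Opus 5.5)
Your argument follows the paper's proof. For (i) you write $\rho\,Lf={\rm div}(\rho(A+C)\nabla f)$ and test against a function equal to $1$ near ${\rm supp} f$; the paper cites \cite[Proposition 5.7]{L25bv} for the identity, and your density justification of the product rule is a valid substitute. For (ii) and (iii) you use Theorem~\ref{maintheore}, then $u=h/\rho$, the equation $\int\langle\rho(A+C^T)\nabla u,\nabla f\rangle\,dx=0$, and a Liouville theorem. The only difference is that you derive the Liouville step from De Giorgi--Nash--Moser instead of citing \cite[Theorem 9.11]{F07}. That derivation is sound: the H\"older-decay and Harnack constants for the bounded non-symmetric matrix $\rho(A-C)$, whose symmetric part is $\rho A\ge\theta\lambda\, id$, are scale invariant. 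In (ii), monotonicity gives $\sup_{B_{R_0}}(u-m)\le\sup_{B_R}(u-m)\le C\inf_{B_R}(u-m)\to0$ for each fixed $R_0$.

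The step that does not go through as written is ``Theorem~\ref{maintheore} therefore applies'', and the paper's proof has the same gap. That theorem needs ${\rm div}A\in L^2_{loc}$, not only $\mathbf{G}\in L^2_{loc}$. The hypothesis ${\rm div}(A+C)\in L^2_{loc}$ does not imply it, because ${\rm div}\,C$ can cancel the irregular part of ${\rm div}A$.

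For example, take $d=2$ and $g(t)=\frac14\min(|t|^{1/4},1)$, and choose $A+C$ with rows $(1,g(x_2))$ and $(g(x_1),1)$. That is, $a_{11}=a_{22}=1$, $a_{12}=a_{21}=\frac12(g(x_1)+g(x_2))$, and $c_{12}=-c_{21}=\frac12(g(x_2)-g(x_1))$. Then $A$ is continuous (hence in $VMO_{loc}$) with eigenvalues in $[3/4,5/4]$, $C$ is bounded, and ${\rm div}(A+C)=0$. Yet ${\rm div}A=\frac12(g'(x_2),g'(x_1))\notin L^2_{loc}$, since $|g'(t)|^2\sim|t|^{-3/2}$ near $0$.

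To close the gap, you can either add ${\rm div}A\in L^2_{loc}$ as a hypothesis (it holds in the paper's examples), or redo Theorem~\ref{submaintheor} with ${\rm div}(A+C)$ in place of ${\rm div}A$. For the second route:
\begin{itemize}
\item For $w\in H^{2,p}(B)\cap H^{1,2}_0(B)$ you still have $\int\langle A\nabla w,\nabla w\rangle\,dx=-\int\big({\rm trace}(A\nabla^2w)+\langle{\rm div}(A+C),\nabla w\rangle\big)w\,dx$. This uses $\langle C\nabla w,\nabla w\rangle=0$, ${\rm trace}(C\nabla^2 w)=0$, and the same density argument as in your first bullet. So the energy bounds carry over with $\|{\rm div}(A+C)\|_{L^2(B)}$ in place of $\|{\rm div}A\|_{L^2(B)}$.
\item In the final identification, take $u\in C_0^\infty(B_s(x_0))$, where $\chi\equiv1$. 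Then $\int u\,L^AR_{\alpha_k}h\,dx=-\int\langle(A+C)\nabla R_{\alpha_k}h,\nabla u\rangle\,dx-\int\langle{\rm div}(A+C),\nabla R_{\alpha_k}h\rangle u\,dx$.
\item Bound this directly by $C/\alpha_k$, using $\|\chi\nabla(\alpha_kR_{\alpha_k}h)\|_{L^2}\le C$. This avoids the adjoint expansion of $L^0(\chi u)$, which would involve ${\rm div}(A-C)$, and still gives $h\in H^{1,2}(B_s(x_0))$.
\end{itemize}
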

\begin{proof}
(i) Let $f \in C_0^{\infty}(\mathbb{R}^d)$.
Then, by \cite[Proposition 5.7]{L25bv}, 
\begin{equation} \label{divtracrep}
 \frac{1}{\rho} {\rm div} \Big( \rho(A+C) \nabla f \Big) = {\rm trace}(A \nabla^2 f) + \left\langle {\rm div}(A+C) + \frac{1}{\rho}(A+C^T) \nabla \rho, \nabla f \right\rangle.
\end{equation}
Take an open ball $V$ so that ${\rm supp}(f) \subset V$. Let $\chi \in C_0^{\infty}(\mathbb{R}^d)$ with $\chi \equiv 1$ on $\overline{V}$. Then,
\begin{align*}
&\int_{\mathbb{R}^d} \bigg( {\rm trace}(A \nabla^2 f) + \left\langle {\rm div}(A+C) + \frac{1}{\rho}(A+C^T) \nabla \rho, \nabla f \right\rangle \bigg)\, \rho\, dx \\
&=\int_{\mathbb{R}^d}  {\rm div} \Big( \rho(A+C) \nabla f \Big)\,dx =  \int_{\mathbb{R}^d}  {\rm div} \Big( \rho(A+C) \nabla f \Big) \chi \,dx \\
& = -\int_{V} \langle \rho(A+C) \nabla f, \nabla \chi \rangle\,dx=0.
\end{align*}
(ii)  First, by Theorem \ref{maintheore}, $h \in H^{1,2}_{loc}(\mathbb{R}^d)$.
Let $\hat{h}:=\frac{h}{\rho}$. Then, $\hat{h} \in H^{1,2}_{loc}(\mathbb{R}^d)$ with $\hat{h} \geq 0$ in $\mathbb{R}^d$ and it follows from \eqref{divtracrep} that
\begin{align*}
0=&\int_{\mathbb{R}^d} \bigg( {\rm trace}(A \nabla^2 f) + \left\langle {\rm div}(A+C) + \frac{1}{\rho}(A+C^T) \nabla \rho, \nabla f \right\rangle \bigg)\, h\, dx  \\
& = \int_{\mathbb{R}^d} {\rm div} \Big( \rho(A+C) \nabla f \Big) \cdot \hat{h}\,dx = -\int_{\mathbb{R}^d} \langle \rho (A+C^T) \nabla \hat{h}, \nabla f \rangle \,dx \quad \text{ for all $f \in C_0^{\infty}(\mathbb{R}^d)$}.
\end{align*}
By the Liouville-type theorem \cite[Theorem 9.11(i)]{F07}, $\hat{h}$ is constant, so that the assertion follows. \\ \\
(iii) As in (ii), $h \in H^{1,2}_{loc}(\mathbb{R}^d)$. Moreover, $\hat{h}:=\frac{h}{\rho} \in H^{1,2}_{loc}(\mathbb{R}^d) \cap L^{\infty}(\mathbb{R}^d)$ satisfies
$$
 \int_{\mathbb{R}^d} \langle \rho (A+C^T) \nabla \hat{h}, \nabla f \rangle \,dx=0 \quad \text{ for all $f \in C_0^{\infty}(\mathbb{R}^d)$}.
$$
Using the Liouville-type theorem \cite[Theorem 9.11(ii)]{F07}, $\hat{h}$ is constant, so that the assertion follows.
\end{proof}
\centerline{}
\noindent
To present a concrete example, we consider the following function:
Let $\varepsilon \in (0, 10^{-2026})$ be fixed. Define
\begin{equation} \label{defnpsi}
\Psi(t) := 
\begin{cases}
1, & t \in (-\infty, -1), \\[6pt]
1 + \displaystyle\int_{-1}^{t} |s|^{-\frac{1}{2+\varepsilon}}\,ds, &  t \in [-1,1], \\[6pt]
1 + \displaystyle\int_{-1}^{1} |s|^{-\frac{1}{2+\varepsilon}}\,ds, & t \in (1, \infty).
\end{cases}
\end{equation}
Then $\Psi \in C(\mathbb{R}) \cap L^{\infty}(\mathbb{R})$, and its derivative $\Psi' \in \bigcap_{p \in (1,\; 2+\varepsilon)} L^p(\mathbb{R})$ is given by
\[
\Psi'(t) := 
\begin{cases}
0, & t \in (-\infty, -1), \\[6pt]
|t|^{-\frac{1}{2+\varepsilon}}, &  t \in (-1,0) \cup (0,1), \\[6pt]
0, & t \in (1, \infty).
\end{cases}
\]
In particular, $\Psi' \notin L_{loc}^{2+\varepsilon}(\mathbb{R})$, and
\[
1 \leq \Psi(t) \leq 1 + \int_{-1}^{1} |s|^{-\frac{1}{2+\varepsilon}}\,ds \quad \text{for all } t \in \mathbb{R}.
\]

\begin{exam} \label{ourexam1}
Let $d \geq 2$ and $\varepsilon \in (0,10^{-2026})$ be fixed. Let $\Psi$ be the function defined in \eqref{defnpsi}. Define
$$
\rho(x_1, \ldots, x_d) = \prod_{i=1}^d \Psi(x_i), \quad x =(x_1, \ldots, x_d) \in \mathbb{R}^d.
$$
Then $\rho \in C(\mathbb{R}^d) \cap L^{\infty}(\mathbb{R}^d) \cap H^{1,2}_{loc}(\mathbb{R}^d)$, and $\nabla \rho \notin L^{2+\varepsilon}_{loc}(\mathbb{R}^d)$. Moreover,
$$
1 \leq \rho(x) \leq \left( 1 + \int_{-1}^{1} |s|^{-\frac{1}{2+\varepsilon}}\,ds \right)^d \quad \text{for all } x \in \mathbb{R}^d.
$$
Let $A = I$, $\mathbf{G} = \frac{1}{\rho} \nabla \rho$. Then
\begin{equation} \label{infinitesinvar}
\int_{\mathbb{R}^d} \left( {\rm trace}(A \nabla^2 f)  + \langle \mathbf{G}, \nabla f \rangle \right) \rho\,dx = 0 \quad \text{for all } f \in C_0^{\infty}(\mathbb{R}^d).
\end{equation}
Assume that $h \in L^{\infty}_{loc}(\mathbb{R}^d)$ with $h \geq 0$ in $\mathbb{R}^d$ satisfies
\begin{equation} \label{infininvah}
\int_{\mathbb{R}^d} \left({\rm trace}(A \nabla^2 f)  + \langle \mathbf{G}, \nabla f \rangle \right) h \,dx = 0 \quad \text{for all } f \in C_0^{\infty}(\mathbb{R}^d).
\end{equation}
Then it follows from Theorem \ref{applicationth}(ii) that $h$ is a constant multiple of $\rho$. If $h \in L^{\infty}(\mathbb{R}^d)$ satisfies \eqref{infininvah}, then it follows from Theorem \ref{applicationth}(iii) that $h$ is also a constant multiple of $\rho$.
\end{exam}

\begin{exam}
Let $d \geq 3$, and let $\rho$ be the function defined in Example \ref{ourexam1}. Define $A = \rho\, id$, and let $C = (c_{ij})_{1 \leq i,j \leq d}$ be an anti-symmetric matrix of functions given by
\[
c_{ij} =
\begin{cases}
\rho, & \text{if } (i,j) = (1,d), \\[6pt]
-\rho, & \text{if } (i,j) = (d,1), \\[6pt]
0, & \text{otherwise}.
\end{cases}
\]
Let $\mathbf{G} = {\rm div}(A + C) = \nabla \rho + (-\partial_d \rho, 0, \ldots, \partial_1 \rho)$. Then, $\mathbf{G} \in L^2_{loc}(\mathbb{R}^d, \mathbb{R}^d)$, and $\mathbf{G} \notin L^{2+\varepsilon}_{loc}(\mathbb{R}^d, \mathbb{R}^d)$. Moreover, \eqref{infinitesinvar} holds.
Now if $h \in L^{\infty}_{loc}(\mathbb{R}^d)$ with $h \geq 0$ in $\mathbb{R}^d$ satisfies \eqref{infininvah}, then by Theorem \ref{applicationth}(ii), $h$ is a constant. If $h \in L^{\infty}(\mathbb{R}^d)$ satisfies \eqref{infininvah}, then it follows from Theorem \ref{applicationth}(iii) that $h$ is also a constant.
\end{exam}

%\subsection*{Acknowledgments}
%This research was supported by the regional innovation system \& education (RISE)-(Regional Growth Innovation LAB) program through the Gyeongbuk RISE Center, funded by the Ministry of Education (MOE) and the Gyeongsangbuk-do, Republic of Korea (2025-rise-15-105).

%\subsection*{Author contributions}
%This is a single-author manuscript, entirely written by the author.

%\subsection*{Conflict of interest}
%The author declares no conflict of interest.

%\subsection*{Ethics approval}
%The conducted research does not involve human participants or animal subjects.

%\subsection*{Data availability statement}
%Not applicable.

\centerline{}
\centerline{}
Haesung Lee\\
Department of Mathematics and Big Data Science,  \\
Kumoh National Institute of Technology, \\
Gumi, Gyeongsangbuk-do 39177, Republic of Korea, \\
E-mail: fthslt@kumoh.ac.kr, \; fthslt14@gmail.com
\end{document}